\declaretheoremstyle[bodyfont=\sl]{slanted}
\declaretheorem[name=Definition,style=definition,qed=$\dashv$,
numberwithin=section]{dfn}
\declaretheorem[name=Definition,style=definition,numbered=no,qed=$\dashv$]{dfn*}
\declaretheorem[name=Definition,style=definition,numbered=no]{dfnnoqed*}
\declaretheorem[name=Theorem,style=slanted,sibling=dfn]{tm}
\declaretheorem[name=Theorem,style=slanted,numbered=no]{tm*}
\declaretheorem[name=Lemma,style=slanted,sibling=dfn]{lem}
\declaretheorem[name=Corollary,style=slanted,sibling=dfn]{cor}
\declaretheorem[name=Corollary,style=slanted,numbered=no]{cor*}
\declaretheorem[name=Remark,style=definition,sibling=dfn]{rem}
\declaretheoremstyle[headfont=\scshape]{claimstyle}
\declaretheorem[name=Claim,style=claimstyle]{clm}
\declaretheorem[name=Claim,style=claimstyle,numbered=no]{clm*}
\declaretheorem[name=Subclaim,style=claimstyle,numbered=no]{sclm*}
\declaretheorem[name=Subsubclaim,style=claimstyle,numbered=no]{ssclm*}
\declaretheoremstyle[headfont=\scshape]{casestyle}
\declaretheorem[name=Case,style=casestyle]{case}
\declaretheorem[name=Subcase,style=casestyle,numberwithin=case]{scase}
\newcommand{\DC}{\mathrm{DC}}
\newcommand{\cd}{\circledast}
\newcommand{\wt}{\widetilde}
\newcommand{\lgcd}{\mathrm{lgcd}}
\newcommand{\compmode}{1}
\newcommand{\compopt}[2]{\ifthenelse{\equal{\compmode}{0}}{#1}{#2}}
\newcommand{\putamin}{\mathrm{putamin}}
\newcommand{\her}{\mathcal{H}}
\newcommand{\sub}{\subseteq}
\newcommand{\inter}{\cap}
\newcommand{\om}{\omega}
\newcommand{\pow}{\mathcal{P}}
\newcommand{\OR}{\mathrm{OR}}
\newcommand{\cut}{\backslash}
\newcommand{\Tt}{\mathcal{T}}
\newcommand{\Uu}{\mathcal{U}}
\newcommand{\rg}{\mathrm{rg}}
\newcommand{\ins}{\trianglelefteq}
\newcommand{\pins}{\triangleleft}
\newcommand{\crit}{\mathrm{cr}}
\newcommand{\rest}{\!\upharpoonright\!}
\newcommand{\com}{\circ}
\newcommand{\lh}{\mathrm{lh}}
\newcommand{\Ult}{\mathrm{Ult}}
\newcommand{\sats}{\models}
\newcommand{\es}{\mathbb{E}}
\newcommand{\Ttvec}{{\vec{\Tt}}}
\newcommand{\pred}{\mathrm{pred}}
\newcommand{\dirlim}{\mathrm{dir lim}}
\newcommand{\id}{\mathrm{id}}
\newcommand{\conc}{\ \widehat{\ }\ }
\DeclareMathOperator{\card}{card}
\newcommand{\Xx}{\mathcal{X}}
\newcommand{\tu}{\textup}
\renewcommand{\cut}{\backslash}
\newcommand{\exit}{\mathrm{ex}}
\newcommand{\eqdef}{=_{\mathrm{def}}}
\newcommand{\tembto}{\hookrightarrow}
\newcommand{\ddd}{\mathrm{ddd}}
\newcommand{\dds}{\mathrm{dds}}
\newcommand{\successor}{\mathrm{succ}}
\newcommand{\passive}{\mathrm{pv}}
\newcommand{\dropset}{\mathscr{D}}
\newcommand{\inflatearrow}{\rightsquigarrow}
\renewcommand{\cut}{\backslash}
\newcommand{\mininflatearrow}{\inflatearrow_{\minim}}
\newcommand{\minim}{\mathrm{min}}
\newcommand{\spc}{\mathrm{spc}}
\newcommand{\D}{\mathrm{D}}
\title{Full normalization for $\kappa^+$-supercompactness}
\author{Farmer Schlutzenberg\footnote{afirstname dot alastname at tuwien dot ac dot at, afirstname dot alastname at gmail dot com}}
\begin{document}

\maketitle

\begin{abstract}
We extend the normalization results of \cite{fullnorm}
to mice at the level of $\kappa^+$-supercompactness:
given a normal iteration strategy $\Sigma$ for such a mouse $M$,
with both $M$ and $\Sigma$ satisfying certain condensation properties,
we extend $\Sigma$ to a strategy $\Sigma^*$ for stacks of normal trees,
such that every iterate via $\Sigma^*$ is in fact a normal iterate via $\Sigma$.
\end{abstract}

\section{Introduction}

The literature on normalization of iteration trees
has so far dealt exclusively with mice with only short extenders on their sequence (see \cite{ACPFMP}, \cite{steel_local_HOD_comp},
 \cite{iter_for_stacks}, \cite{fullnorm},  \cite{jensen_book}, \cite{siskind_steel}), which reach the level of (many) superstrong cardinals. In this paper we extend the methods of (full) normalization to  mice
 at the level of $\kappa^+$-supercompactness; these mice
 include long extenders on their sequence (see \cite{steel_deiser_notes}, \cite{nsp1}, \cite{nsp1fs}, \cite{voellmer},
 \cite{kappa-plus_v3}).
We follow \cite{kappa-plus_v3} in terms of the precise setup of our premice and their fine structure.

Recall that there are two basic kinds of normalization:
(i) what is referred to by Steel in \cite{ACPFMP}
as \emph{full normalization}, or by the author in \cite{fullnorm} simply as \emph{normalization},
and (ii) what is referred to by Steel in \cite{ACPFMP} as \emph{embedding normalization},
and by the author in \cite{iter_for_stacks}
as \emph{normal realization}. In this paper we will only discuss kind (i).

Also, there are various starting points and conclusions one can consider. In this paper, as in \cite{fullnorm} and \cite{iter_for_stacks}, we will work from the existence of an iteration strategy for normal trees, which satisfies a certain condensation property (which we formulate here), and use it to define a strategy for optimal stacks of normal trees (where \emph{optimal} means there is no unnecessary dropping in model or degree).
As in \cite{fullnorm},
all iterates via this induced stacks strategy will in fact be iterates via the original normal strategy, which coincides with the first normal round of the stacks strategy. We will also show that the relevant condensation property (for the normal strategy) follows from the Dodd-Jensen (alternatively, weak Dodd-Jensen) property. As mentioned, there are other kinds of starting hypotheses one might have; for example, one might start with a strategy for (optimal) stacks, provided in some other manner, and want to show that all iterates
via this stacks strategy are in fact iterates via the normal strategy which is given by its first round. This seems to be more the situation in Steel's \cite{ACPFMP}. But the author expects that the methods developed here should be very relevant to and adapt to those needed in other such contexts. This should in turn provide a key to extending the results of \cite{ACPFMP} to models at the level of $\kappa^+$-supercompactness.

Normalization at the $\kappa^+$-supercompact level is, overall, very much like that at the short extender level (at least, regarding the version of normalization we consider here, as  described above). We will establish the following results,
which are direct analogues of results in \cite{fullnorm}.
(In the following statement,
\emph{$m$-standard} premouse
is defined in
 \ref{dfn:m-standard};
 it includes $m$-soundness,
 but adds some more further structural requirements.
 The prefix \emph{optimal-}
(for the strategy $\Sigma^*$)
is as defined in \cite{kappa-plus_v3}
and is like in \cite[\S1.1.5]{iter_for_stacks};
it prevents player I from making artificial drops.
\emph{Minimal inflation condensation} is just the direct generalization of that notion from \cite[***Definition 3.28]{fullnorm}; see Definition \ref{dfn:minimal_inflation_condensation}. And
\emph{drops in Dodd-degree}
and \emph{drops in any way}
are defined in \cite[***Definition 2.38]{kappa-plus_v3}. And the premice
discussed might be pure $L[\es]$-premice, or some other kind such as strategy premice,
with extender sequence as in \cite{kappa-plus_v3}.)

\begin{tm}\label{tm:full_norm_stacks_strategy}
Let $\Omega>\om$ be a regular cardinal. Let $m\in\om\cup\{0^-\}$.
Let $M$ be an $m$-standard
 premouse \tu{(}at the level of $\kappa^+$-supercompactness,
 as in \cite{kappa-plus_v3}\tu{)}.
 Let $\Sigma$
 be an $(m,\Omega+1)$-strategy for $M$ with minimal inflation
 condensation.
 Then there is an optimal-$(m,\Omega,\Omega+1)^*$-strategy
 $\Sigma^*$ for $M$ satisfying the conclusions of \cite[***Theorem 1.1]{fullnorm}.
 That is,
 $\Sigma\sub\Sigma^*$
and for every stack
$\vec{\Tt}=\left<\Tt_\alpha\right>_{\alpha<\lambda}$ via $\Sigma^*$ with a last
model
and with $\lambda<\Omega$,
there is an $m$-maximal successor length tree $\Xx$
on $M$ such that:
\begin{enumerate}[label=--]
\item $\Xx\rest\Omega+1$ is via $\Sigma$,
\item  if $\lh(\Tt_\alpha)<\Omega$ for all $\alpha<\lambda$
then $\lh(\Xx)<\Omega$,
\item
 $M^{\vec{\Tt}}_\infty=M^\Xx_\infty$
and $\deg^{\vec{\Tt}}_\infty=\deg^\Xx_\infty$,
\item  $b^{\vec{\Tt}}$ drops in model \tu{(}degree,  Dodd-degree\tu{)} iff $b^\Xx$ does,
\item if $b^{\vec{\Tt}}$ does not drop in any way
then $i^{\vec{\Tt}}_{0\infty}=i^\Xx_{0\infty}$.
\end{enumerate}
Moreover,
 $\Sigma^*$ is $\Delta_1(\{\Sigma\})$,
uniformly in $\Sigma$,
and if $\card(M)<\Omega$ then $\Sigma^*\rest\her_{\Omega}$
is $\Delta_1^{\her_{\Omega}}(\Sigma\rest\her_\Omega)$,
uniformly in $\Sigma$.
\end{tm}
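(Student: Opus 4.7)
The plan is to follow the overall strategy of \cite[Theorem 1.1]{fullnorm} and adapt it to accommodate long extenders. The central tool is \emph{minimal inflation}: an $m$-maximal tree $\Uu$ that extends a tree $\Tt$ via a tree embedding $\temb : \Tt \to \Uu$ which is minimal compatible with a prescribed sequence of new extenders to be inserted. Minimal inflation condensation of $\Sigma$ (Definition \ref{dfn:minimal_inflation_condensation}) is the closure property ``$\Tt$ via $\Sigma$ implies every minimal inflation of $\Tt$ is via $\Sigma$'', and this is precisely what is needed to keep the construction of $\Sigma^*$ inside $\Sigma$.

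I construct $\Sigma^*$ by induction on the stack length. For each stack $\vec{\Tt} = \langle \Tt_\alpha \rangle_{\alpha<\lambda}$ to be played via $\Sigma^*$, I maintain an $m$-maximal tree $\Xx_\alpha$ on $M$ via $\Sigma$ for each $\alpha \le \lambda$, together with tree embeddings $\temb_\alpha$ witnessing that $\vec\Tt\rest\alpha$ sits inside $\Xx_\alpha$ in the sense of matching final models, branches, and drop patterns. At a successor step $\alpha \to \alpha+1$, the new tree $\Tt_\alpha$ is a normal tree on the last model of $\vec\Tt\rest\alpha$, which via $\temb_\alpha$ is identified with the last model of $\Xx_\alpha$; I then form $\Xx_{\alpha+1}$ as the minimal inflation of $\Xx_\alpha$ driven by the extender sequence of $\Tt_\alpha$, and invoke minimal inflation condensation to conclude that $\Xx_{\alpha+1}$ is via $\Sigma$. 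At a limit $\lambda$, I take the natural direct limit tree $\Xx_\lambda$ and, when a cofinal branch of $\vec\Tt$ must be produced, pull back $\Sigma(\Xx_\lambda)$ via the $\temb_\alpha$ system to define $\Sigma^*(\vec\Tt)$. The clauses on matching final models, degrees, Dodd-degrees (using the drop-in-Dodd-degree analysis of \cite[Definition 2.38]{kappa-plus_v3}), and equality of iteration embeddings in the no-drop case then fall out of the standard properties of the tree embeddings $\temb_\alpha$.

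The main obstacle will be making the minimal inflation construction work in the presence of long extenders. In the short-extender setting of \cite{fullnorm}, an extender $E$ used in $\Tt_\alpha$ is lifted to $\Xx_{\alpha+1}$ by padding $\Xx_\alpha$ with a small number of minimal extenders chosen by $\Sigma$, relying on the familiar short-extender agreement between the models of $\Xx_\alpha$. A long extender, whose generators may extend well beyond $\lh(E)$, interacts more delicately with extenders already on $\Xx_\alpha$: one must track the entire support of $E$ against the supercompactness-style agreement of earlier models, and verify that the lifted sequence of new extenders determined by $E$ remains legal along the relevant branch of $\Xx_\alpha$. The list of basic lemmas requiring adaptation is: existence and uniqueness of the minimal inflation driven by a given extender; transitivity of minimal inflations; compatibility with copy maps; preservation of the drop/degree/Dodd-degree structure through the embedding; and coherence at limits. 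Each admits a direct but technical adaptation of the corresponding lemma of \cite[\S3--\S5]{fullnorm}, with the fine-structural framework for long extenders of \cite{kappa-plus_v3} supplying the required ultrapower and agreement technology.

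The $\Delta_1(\{\Sigma\})$ definability is then immediate: computing $\Sigma^*(\vec\Tt)$ amounts to iteratively computing minimal inflations (a $\Sigma_0$ process from $\vec\Tt$ and $\Sigma$) and reading off the value of $\Sigma$ on the resulting normal trees, and every witnessing object is constructed from $\vec\Tt$ and $\Sigma$ by bounded quantification. When $\card(M)<\Omega$, every stack under consideration and all associated inflated normal trees live inside $\her_{\Omega}$, so the whole computation stays in $\her_{\Omega}$ and $\Sigma^*\rest\her_{\Omega}$ is $\Delta_1^{\her_{\Omega}}(\Sigma\rest\her_{\Omega})$ uniformly in $\Sigma$.
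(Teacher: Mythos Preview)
Your overall plan---follow \cite{fullnorm} using minimal inflation and minimal inflation condensation, building $\Sigma^*$ by induction on stack length with a system of tree embeddings---is correct and is exactly what the paper does; its proof is literally ``proceed as in \cite[\S7]{fullnorm}, replacing drops in model or degree by drops of any kind'', once the supporting lemmas of \cite[\S\S2--6]{fullnorm} have been re-verified in the long-extender setting.

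However, you misidentify the main technical obstacle. The issue is \emph{not} that a long extender has ``generators extending well beyond $\lh(E)$'' (it does not; generators are still bounded by the index), nor is it primarily about tracking supercompactness-style agreement between models. The central new phenomenon is this: if $P$ is active short and $F$ is long with $\crit(F)=\crit(F^P)$, then the naive $\Ult_0(P,F)$ is a proper \emph{protomouse}, not a premouse, because the ultrapower map is discontinuous at $\crit(F)^{+P}$. Following \cite{kappa-plus_v3}, one instead forms $\Ult_0(P,F)$ ``avoiding the protomouse'' by composing the naive active extender with the short part of $F$. This situation arises both when iterating dropdown sequences (the paper's Lemma \ref{lem:ult_dropdown}) and, crucially, in the basic extender commutativity calculation (Lemma \ref{lem:extender_comm}, adapting \cite[Lemma 2.9]{fullnorm}); the real work is verifying that the identities $U_\cd=\widetilde{U}_\cd$ and $U^\cd=\widetilde{U}^\cd$ still hold when some component ultrapowers are formed in this non-naive way, and that the appropriate variant of the Shift Lemma (from \cite[2.46--2.49]{kappa-plus_v3}) applies. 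Your list of lemmas to adapt is reasonable, but without having isolated the protomouse issue you would be stuck at the successor-step computation: when you try the ``direct but technical adaptation'' of \cite[Lemma 2.9]{fullnorm} and find the naive ultrapower is not a premouse, there is nothing in your proposal indicating how to proceed.
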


Just as in \cite[Corollary 1.4***]{fullnorm}, we have the following corollary, using Theorem \ref{tm:wDJ_implies_cond}
and the results of \cite[***\S2.7]{kappa-plus_v3} on weak Dodd-Jensen:

\begin{cor}
Assume $\DC$. Let $\Omega>\om$ be regular. Let $m\in\om\cup\{0^-\}$.
Let $M$ be a countable, $m$-sound, $(m,\Omega,\Omega+1)^*$-iterable pure $L[\es]$-premouse.\footnote{For pure $L[\es]$-premice, $m$-standardness is a consequence of $(m,\om_1,\om_1+1)^*$-iterability.}
Then there is an $(m,\Omega,\Omega+1)^*$-strategy $\Sigma^*$ for $M$,
with first round $\Sigma$, such that $\Sigma,\Sigma^*$
are related as in Theorem \ref{tm:full_norm_stacks_strategy}.
\end{cor}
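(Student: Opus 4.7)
The plan is to assemble the three ingredients explicitly cited in the statement. First I would use $\DC$ together with the countability of $M$ to fix an enumeration $\left<x_n\right>_{n<\om}$ of $M$ in order type $\om$, and then apply the results of \cite[\S2.7]{kappa-plus_v3} on weak Dodd--Jensen to obtain, from the given $(m,\Omega,\Omega+1)^*$-iterability, an $(m,\Omega+1)$-strategy $\Sigma$ for normal trees on $M$ with the weak Dodd--Jensen property relative to $\left<x_n\right>_{n<\om}$. This is the standard Neeman--Steel style construction, lifted to the $\kappa^+$-supercompact setting in \cite{kappa-plus_v3}, and its existence only uses $\DC$ and countability together with $(m,\om_1,\om_1+1)^*$-iterability (which is implied by our hypothesis).

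Next I would invoke Theorem \ref{tm:wDJ_implies_cond} to conclude that $\Sigma$ has minimal inflation condensation; this is the wDJ-implies-condensation step, whose whole purpose in this paper is precisely to feed into the main theorem. In parallel, I would observe that the footnote applies: for pure $L[\es]$-premice, $(m,\om_1,\om_1+1)^*$-iterability already entails $m$-standardness, so $M$ meets the hypothesis of Theorem \ref{tm:full_norm_stacks_strategy}. At that point all the premises of the main theorem are verified for this particular choice of $\Sigma$.

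Finally, I would apply Theorem \ref{tm:full_norm_stacks_strategy} directly to $(M,\Sigma)$ to produce the optimal-$(m,\Omega,\Omega+1)^*$-strategy $\Sigma^*$ extending $\Sigma$ and satisfying the listed properties. The ``first round $\Sigma$'' clause is built into the conclusion $\Sigma\sub\Sigma^*$ together with the fact that $\Sigma^*$ agrees with $\Sigma$ on normal trees of length $\leq\Omega+1$, which is part of the main theorem.

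The main obstacle here is really one of verification rather than invention: all the serious work is in Theorem \ref{tm:full_norm_stacks_strategy} and Theorem \ref{tm:wDJ_implies_cond}. The delicate point is to confirm that the weak Dodd--Jensen construction of \cite[\S2.7]{kappa-plus_v3} produces a strategy of the right type (namely an $(m,\Omega+1)$-strategy for normal trees on an $m$-standard premouse, with the exact wDJ formulation that Theorem \ref{tm:wDJ_implies_cond} consumes), so that the three citations genuinely chain together without a hidden mismatch in the degree/standardness/strategy-type bookkeeping.
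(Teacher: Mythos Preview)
Your proposal is correct and is exactly the argument the paper has in mind: obtain a normal strategy with weak Dodd--Jensen via \cite[\S2.7]{kappa-plus_v3} (using $\DC$ and countability), deduce the condensation property via Theorem~\ref{tm:wDJ_implies_cond}, and then feed this into Theorem~\ref{tm:full_norm_stacks_strategy}. One small point of bookkeeping: Theorem~\ref{tm:wDJ_implies_cond} literally yields minimal \emph{hull} condensation, and you then pass to minimal \emph{inflation} condensation via the adaptation of \cite[Lemma~3.32]{fullnorm} noted in Remark~\ref{lem:unique_strat_has_inf_cond}; also note the extra hypothesis in Theorem~\ref{tm:wDJ_implies_cond} for the case ``$M$ active short, $m=0$'', which is tied to how weak Dodd--Jensen is set up in \cite[\S2.7]{kappa-plus_v3}.
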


And as in \cite[Corollary 7.3***]{fullnorm}, we can  deduce:
\begin{cor}\label{cor:project_to_om_strategy}
Let $\Omega>\om$ be regular.
Let $m<\om$ and let $M$
be an $(m+1)$-sound pure $L[\es]$-premouse with $\rho_{m+1}^M=\om$,
and $\Sigma$ be a \tu{(}the unique\tu{)} $(m,\Omega+1)$-strategy
for $M$.
Then there is an optimal-$(m,\Omega,\Omega+1)^*$-strategy
$\Sigma^*$ for $M$
such that every $\Sigma^*$-iterate of $M$
of size ${<\Omega}$ is a $\Sigma$-iterate of $M$.
\end{cor}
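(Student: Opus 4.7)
The plan is to reduce the corollary to Theorem \ref{tm:full_norm_stacks_strategy} by verifying its only nontrivial hypothesis — that $\Sigma$ has minimal inflation condensation (MIC) — and then reading off the conclusion about $\Sigma$-iterates directly from the theorem's equality $M^{\vec\Tt}_\infty = M^\Xx_\infty$. The uniqueness of $\Sigma$ implicit in the statement comes from the hypothesis $\rho_{m+1}^M = \omega$ together with $(m+1)$-soundness, which provide a canonical real parameter coding $M$ and pin down at most one $(m,\Omega+1)$-strategy.

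The key step is to establish MIC for $\Sigma$. I would take a countable elementary submodel $X \prec V_\theta$ for sufficiently large regular $\theta$, containing $M$ and all relevant parameters, and let $\pi\colon \bar V \to V_\theta$ be the inverse of its transitive collapse, with $(\bar M, \bar\Sigma, \bar\Omega) = \pi^{-1}(M, \Sigma, \Omega)$. Then $\bar M$ is a countable $(m+1)$-sound premouse with $\rho_{m+1}^{\bar M} = \omega$, and $\bar\Sigma$ is the unique $(m, \bar\Omega+1)$-strategy for $\bar M$. The weak Dodd-Jensen machinery of \cite[***\S2.7]{kappa-plus_v3} — applicable to $\bar M$ thanks to countability and the canonical enumeration coming from its fine-structural real parameter — shows that $\bar\Sigma$ has the weak Dodd-Jensen property; Theorem \ref{tm:wDJ_implies_cond} then gives that $\bar\Sigma$ has MIC. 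Any failure of MIC for $\Sigma$ would be witnessed by a countable inflation configuration, which reflects into some such $X$, yielding a failure of MIC for $\bar\Sigma$ — contradiction. Thus $\Sigma$ has MIC.

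Now Theorem \ref{tm:full_norm_stacks_strategy} produces the desired optimal-$(m, \Omega, \Omega+1)^*$-strategy $\Sigma^*$ extending $\Sigma$. Given a $\Sigma^*$-iterate $N$ of $M$ of size $<\Omega$, write $N = M^{\vec\Tt}_\infty$ for a stack $\vec\Tt = \langle \Tt_\alpha \rangle_{\alpha<\lambda}$ via $\Sigma^*$ with a last model; here $\lambda < \Omega$ since $\Sigma^*$ is an $(m, \Omega, \Omega+1)^*$-strategy. The theorem supplies a successor-length $m$-maximal tree $\Xx$ on $M$ with $\Xx\rest\Omega+1$ via $\Sigma$ and $M^\Xx_\infty = N$. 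Because $N$ has size $<\Omega$ and $\Omega$ is regular, $\lh(\Xx) \le \Omega+1$ (any longer tree would, on the branch leading to $N$, use an extender of index $\ge \Omega$ without sufficient subsequent dropping, forcing $N$ to have size $\ge \Omega$), so $\Xx$ is entirely via $\Sigma$ and $N$ is a $\Sigma$-iterate, as required.

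The main obstacle is the MIC verification: at the $\kappa^+$-supercompact level the inflation trees carry long extenders, and the pullback of a strategy under an elementary $\pi$ requires care to ensure $\pi^{-1}(\Sigma)$ genuinely corresponds to $\bar\Sigma$ and that MIC reflects cleanly into the countable hull. These points are modeled on the short-extender case in \cite[Corollary 7.3***]{fullnorm} and should adapt given the extender-sequence framework of \cite{kappa-plus_v3}, but they constitute the bulk of the work; once they are in place the deduction from Theorem \ref{tm:full_norm_stacks_strategy} is immediate.
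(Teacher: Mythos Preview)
Your overall plan---verify minimal inflation condensation for $\Sigma$, then invoke Theorem~\ref{tm:full_norm_stacks_strategy}, then use the size bound on the iterate to conclude $\lh(\Xx)<\Omega$---matches the paper, and your argument for the last step is correct (since extender indices in $\Xx$ are strictly increasing, $\lh(\Xx)\geq\Omega+1$ would force $\OR(M^\Xx_\infty)\geq\Omega$).

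Where you diverge is in the MIC verification. First, note that $\rho_{m+1}^M=\omega$ together with $(m+1)$-soundness already makes $M$ countable, so passing to a countable elementary submodel to obtain a countable $\bar M$ accomplishes nothing on that front. More to the point, the paper does not go through weak Dodd-Jensen here at all. It instead uses the direct implication ``$\Sigma$ is the unique $(m,\Omega+1)$-strategy $\Rightarrow$ $\Sigma$ has minimal inflation condensation'', which is exactly \cite[Lemma~3.30]{fullnorm}; the paper records in Remark~\ref{lem:unique_strat_has_inf_cond} that this lemma carries over verbatim to the present setting. That is also how \cite[Corollary~7.3]{fullnorm} itself is proved. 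Your reflection-through-a-hull route can probably be pushed through, but it creates exactly the complications you flag in your last paragraph (correspondence between $\bar\Sigma$ and the pullback of $\Sigma$, reflection of a putative MIC failure, and implicitly needing enough choice inside the hull to run the weak Dodd-Jensen construction before appealing to uniqueness to identify the resulting strategy with $\bar\Sigma$). None of this arises on the direct route: uniqueness gives MIC immediately, and then Theorem~\ref{tm:full_norm_stacks_strategy} finishes the job.
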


As in \cite[after Corollary 7.3]{fullnorm},
the minimal version of
 \cite[Theorem 9.6]{iter_for_stacks}
also goes through in our present context.
Analogous to \cite[Theorem 7.2***]{fullnorm}, we also have the following variant:

\begin{tm}\label{tm:stacks_iterability_2}
 Let $\Omega>\om$ be regular. Let $m\in\om\cup\{0^-\}$, $M$ be $m$-standard
 and $\Sigma$
 be an $(m,\Omega)$-strategy for $M$ with minimal inflation condensation.
 Then there is an
 optimal-$(m,{<\om},\Omega)$-strategy $\Sigma^*$ for $M$
 with $\Sigma\sub\Sigma^*$, such that for each stack
$\Ttvec=\left<\Tt_i\right>_{i<n}$ via $\Sigma^*$
with $n<\om$ and  $\lh(\Tt_i)$ a successor ${<\Omega}$
for each $i<n$,
 there is a tree $\Xx$ via $\Sigma$ \tu{(}so $\Xx$ is $m$-maximal\tu{)}
such that $M^\Ttvec_\infty=M^\Xx_\infty$, $\deg^\Ttvec_\infty=\deg^\Xx_\infty$,
 $b^{\Ttvec}\inter\dropset_{\deg}^{\Ttvec}=\emptyset$
 iff $b^\Xx\inter\dropset_{\deg}^\Xx=\emptyset$,
and $i^{\Ttvec}=i^\Xx$ in case $b^{\Ttvec}\inter\dropset_{\deg}^{\Ttvec}=\emptyset$.
\end{tm}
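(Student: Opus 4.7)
The plan is to mimic the construction of Theorem \ref{tm:full_norm_stacks_strategy}, building $\Sigma^*$ by finite induction on the stack length $n$, while observing that under the weaker hypothesis on $\Sigma$ the construction never needs to feed $\Sigma$ a tree of length $\Omega$. Along with each stack $\Ttvec\rest n$ via $\Sigma^*$ (whose component trees all have successor length $<\Omega$), one maintains a single $m$-maximal tree $\Xx_n$ on $M$ via $\Sigma$ of successor length $<\Omega$, satisfying the desired correspondence of final models, degrees, Dodd-degrees, and branch embeddings.

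For the successor step, given $\Xx_n$ and a normal continuation $\Tt_n$ on $M^{\Xx_n}_\infty=M^{\Ttvec\rest n}_\infty$ of successor length $<\Omega$, apply the minimal inflation machinery (generalized from \cite[***Definition 3.28]{fullnorm} to our setting via Definition \ref{dfn:minimal_inflation_condensation}) to form $\Xx_{n+1}\supseteq\Xx_n$, the minimal inflation of $\Tt_n$ over $\Xx_n$. Minimal inflation condensation ensures $\Xx_{n+1}$ is via $\Sigma$, and the verification that $M^{\Xx_{n+1}}_\infty=M^{\Ttvec\rest(n+1)}_\infty$, that the branches drop in the same way (in model, degree, and Dodd-degree), and that the embeddings agree on the non-dropping part, is the same argument as in the corresponding step of Theorem \ref{tm:full_norm_stacks_strategy}. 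The key new observation for this variant is that since $\lh(\Tt_n)$ and $\lh(\Xx_n)$ are both successor ordinals $<\Omega$, the new piece of $\Xx_{n+1}$ built from $\Tt_n$ has length $<\Omega$ and terminates at a last model, so $\lh(\Xx_{n+1})$ is again a successor $<\Omega$; thus $\Sigma$ is never called upon to deliver a branch at length $\Omega$, and the assumption that $\Sigma$ is only an $(m,\Omega)$-strategy (rather than $(m,\Omega+1)$) suffices. Optimality of $\Sigma^*$ is maintained because optimality of $\Ttvec\rest(n+1)$ translates, via the minimal inflation correspondence, to the absence of artificial drops in the corresponding segment of $\Xx_{n+1}$, exactly as in \cite{fullnorm}.

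The main obstacle, as in the proof of Theorem \ref{tm:full_norm_stacks_strategy} itself, is the bookkeeping for minimal inflation in the presence of long extenders on the sequence: preservation of optimality through stages, correct tracking of Dodd-degree behavior along the final branch, and propagation of condensation through the inductive step at the level of $\kappa^+$-supercompactness. For the present variant, however, no new ideas are required beyond those of that theorem; the only genuinely new point is the length bound $\lh(\Xx_{n+1})<\Omega$ noted above, which relies crucially on the successor-length hypothesis on each $\Tt_i$ together with the finiteness of $n$. The $\Delta_1(\{\Sigma\})$ definability and uniformity of $\Sigma^*$ in $\Sigma$ pass through unchanged, since at each stage $\Sigma^*$ is computed from $\Sigma$ by the same minimal inflation procedure.
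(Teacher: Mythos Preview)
Your proposal is correct and follows essentially the same approach as the paper, which simply cites \cite[Theorem 7.2]{fullnorm} with drops ``in model or degree'' replaced by drops ``of any kind''. Two minor terminological slips worth noting: $\Xx_{n+1}$ is a minimal inflation \emph{of $\Xx_n$} (with factor tree $\Xx_{n+1}/\Xx_n$ corresponding to $\Tt_n$), not ``of $\Tt_n$ over $\Xx_n$'', and it need not literally contain $\Xx_n$ as an initial segment---but neither point affects the substance of your argument.
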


There is one key difference
between normalization at the level of $\kappa^+$-super-compactness and that for short extenders, in terms of the results of above and their analogues in \cite{fullnorm}.
It arises in the normalization of a stack $(\Tt,\Uu)$ in which $\Tt$ and $\Uu$ each use just one extender, $E$ and $F$ respectively, and where $E$ is short and $F$ is long, with $\kappa=\crit(E)=\crit(F)$,
$E\in\es_+^M$,  $M\sats$ ZFC, $E$ is $M$-total, $F\in\es_+^{U}$ where $U=\Ult_0(M,E)$,
and  $F$ is $U$-total.
 Following the process of normalization for short extenders, we would then consider $\Ult_0(M|\lh(E),F)$, and would like to show that
\[P=\Ult_0(M|\lh(E),F)\pins\Ult_0(M,F)\]
(we can't have equality of the two ultrapowers here,
because $M|\lh(E)\pins M$).
If one used the naive definition for $P=\Ult_0(M|\lh(E),F)$; that is, defining $P$ to have active extender $F'=\bigcup_{\xi<\lh(E)}j(E\cap M|\xi)$,
where $j:M||\lh(E)\to\Ult_0(M||\lh(E),F)$ is the ultrapower map (this is what one always does in the short extender context), then $P$ would  not even be a premouse,
and hence we would not have $P\pins \Ult_0(M,F)$.
(Instead, $P$
would be a proper protomouse,
as $j$ is discontinuous at $\kappa^{+M}=\kappa^{+M|\lh(E)}$, but $\OR^P=\sup j``\lh(E)$,
and so $F'$ only measures sets in $P||\sup j``\kappa^{+M}$, which is not all of $\pow(j(\kappa))\cap P$.)
But following \cite[***Definition 2.19]{kappa-plus_v3},
we do
not define the active extender $F^P$ of $P$
to be $F'$. Instead, we form $F^P$ so as to ``avoid the protomouse'', setting $F^P=F'\com G$,
where $F'$ is as above and $G$ is the short part of $F$ (which was the long extender above).
Note here that we have $G\in\es^{\Ult_0(M||\lh(E),F)}$
by the ISC for $M$. With $P$ defined in this manner,
we will in fact be able to show using condensation (adapting the same kind of argument as that used in the short extender context) that $P\pins\Ult_0(M,F)$, as desired.

Moreover, the usual calculations show that $F^P=F'\com G$ is equivalent to the extender of the stack $(\Tt,\Uu)$. Thus, the normalization of $(\Tt,\Uu)$
is just the tree $\Xx$ which,
if $\lh(E)<\lh(F)$, 
uses $E$, then $F$, then $F^P$,
whereas if $\lh(F)<\lh(E)$,
then $\Xx$ uses just $F$ then $F^P$.

This kind of issue is the main difference
between normalization at the present level and that at the short extender level (where no proper protomice arise in the process). (We must also avoid  proper protomice arising elsewhere as degree $0$ ultrapowers of models in dropdown sequences, but this is a similar process.) The change also means that the correspondence of tree structures between $\Tt$ and $\Xx$ is slightly modified (in comparison with the short extender context), but this is straightforward. Given this preview, what remains is to integrate the sketched modifications into the standard machinery.

The modifications indicated above are primarily relevant in the ``innermost'' details of  normalization -- those relating to dropdown sequences and preservation thereof under ultrapower maps (which we discuss in \S\ref{sec:dropdown_pres},
adapting \cite[\S2.2***]{fullnorm}) and the basic extender commutativity calculations behind normalizing a pair of branch extenders arising from a  stack of two normal trees into a single branch extender of the normalized tree (in \S\ref{sec:ult_comm},
adapting \cite[\S2.3***]{fullnorm}). Some variants of the usual Shift Lemma, which were described in \cite[***\S2.7]{kappa-plus_v3}, are also needed
at some points. Given these modifications, the remaining calculations are very minor modifications of those in \cite{fullnorm}; those modifications just involve (i) adjusting the  rules for determining tree order  in normal trees, in order to match them with those in \cite[***Definition 2.38]{kappa-plus_v3},
and related modifications
(and this already comes up in
the normalization of a pair of branch extenders, just mentioned),
(ii) incorporating  $0^-$ as a possible degree of nodes in an iteration tree, etc, as in \cite{kappa-plus_v3}, in particular \cite[***Definition 2.38]{kappa-plus_v3}
(this also affects the definition of dropdown sequences, but only in an ``organizational'' manner), and thus,
(iii) replacing ``drops in model or degree'' with ``drops of any kind'' (meaning ``drops in model, degree or Dodd-degree'') throughout. Because these modifications are very minor,
there is no point in replicating everything here, so we instead just enumerate the changes that should be made to \cite{fullnorm}.

However, a further point should be mentioned.
In \cite{nsp1}, the notion of a \emph{$0$-maximal}
iteration tree  on $M$ is defined in such a manner that if $M$ is
active short and there is an $M$-total long $E\in\es^M$ with $\crit(E)=\crit(F^M)$
and $[0,\alpha+1]^\Tt$ does not drop
and $\beta=\pred^\Tt(\alpha+1)$, then $M^{\Tt}_{\alpha+1}=\Ult_1(M^\Tt_\beta,E^\Tt_\alpha)$
(instead of using $\Ult_0$, as is traditionally done in the short extender hierarchy). It is of course
important here that we know that $\kappa^{+M}<\rho_1^M$, where $\kappa=\crit(E)$,
which is one of the requirements of \emph{projectum free spaces}. However, it is not clear to the author
whether full normalization works for strategies
that work like this. Even in the case that $M$
arises from a dropping iteration tree,
there seems to be a problem. 
That is, let $\Uu$ be a $k$-maximal
tree on some $k$-sound premouse $P$,
where $k>0$. Suppose that $[0,\infty]^\Uu$
drops, $\deg^\Uu_\infty=0$,
and $M=M^\Uu_\infty$ is as above.
Now say we want to form a $0$-maximal
tree $\Tt$ on $M$, defined as above.
It is unclear to the author whether
we should expect full normalization for the stack $(\Uu,\Tt)$. The problem is that a key feature
of the full normalization process is that the extenders used in $\Uu$ are minimally inflated
by the extenders of $\Tt$; that is, if $E^\Uu_\alpha$
is active on $\exit^\Uu_\alpha$, then we might inflate $E^\Uu_\alpha$ to the active extender of $\Ult_0(\exit^\Uu_\alpha,E^\Tt_0)$. But if instead want
to iterate $M$ at degree $1$ (as far as is possible)
then the extenders $E^\Uu_\alpha$ used on the branch to $M$ at degree $0$ will be ``inflated'' with more functions than just those in $\exit^\Uu_\alpha$.
This seems to lead to at least a significant complication of the usual calculations, and
it is not clear to the author whether there is an analogue.\footnote{For example if there is just
one such extender $E^\Uu_\alpha$ used, and it is just a measure, then there does appear to be an argument,
but it is different to the standard one.
But if $E^\Uu_\alpha$ has many generators, 
it is not so clear.}

But none of the preceding paragraph impacts us here, because we are following \cite{kappa-plus_v3},
and so
our notion of \emph{$0$-maximal}
is just the naive one (except that it uses
the non-naive notion of degree $0$ ultrapowers we have just described).

\subsection*{Acknowledgements}

Gef\"ordert durch die Deutsche Forschungsgemeinschaft (DFG) -- Projektnummer 445387776.
Funded by the Deutsche Forschungsgemeinschaft (DFG, German Research Foundation) -- project number 445387776.

Teilweise gef\"ordert durch die Deutsche Forschungsgemeinschaft (DFG) im Rahmen der Exzellenzstrategie des Bundes und der L\"ander EXC 2044--390685587, Mathematik M\"unster: Dynamik-Geometrie-Struktur.

Editing funded by the Austrian Science Fund (FWF) [10.55776/Y1498].

The author thanks the organizers of the \emph{From $\om$ to $\Omega$} conference, National University of Singapore, 2023,
and  the organizers of the \emph{2nd Irvine Conference on Inner Model Theory}, UC Irvine, 2023,
for the opportunity to present this work.

\subsection{Notation}\label{subsec:notation}

We follow \cite{kappa-plus_v3}
for terminology and notation on premice and fine structure,
in particular in connection with premice at the level of $\kappa^+$-supercompactness, which are our focus in this paper.
Other more general notation and terminology is as in \cite{fullnorm}, \cite[\S1.1]{iter_for_stacks} and \cite[\S1.1]{premouse_inheriting}.

For an active premouse $N$, $\lgcd(N)$ denotes the largest cardinal of $N$. For an
extender $E$, $\nu(E)$ denotes the natural length of $E$.
Let $\Tt$ be an $m$-maximal tree. If $\alpha+1<\lh(\Tt)$,
then $\exit^\Tt_\alpha$ denotes $M^\Tt_\alpha|\lh(E^\Tt_\alpha)$ (\emph{ex} for \emph{exit} (model)).
 
\section{Dropdown preservation}\label{sec:dropdown_pres}

\begin{rem}\label{rem:degree_omega_avoided}
The comments on degree $\om$ versus degree $0$
in \cite[\S2.1]{fullnorm}
continue to hold in the present context,
and we will follow the same convention as mentioned there,
hence considering only degrees in $\om\cup\{0^-\}$.
\end{rem}

Just as in \cite[Definition 2.10]{premouse_inheriting} and \cite{fullnorm}, we abstract out some condensation we need to assume
holds of the base premouse $M$ we will be iterating.
Such properties hold of sufficiently iterable premice;
see the condensation results in \cite{kappa-plus_v3} (which are related to those in \cite{voellmer} and
of course elsewhere), especially \cite[Theorems 3.17, 3.18]{kappa-plus_v3}.
The notion \emph{$n$-standard}
below enumerates some basic properties of $M^\Tt_\infty$
if $\Tt$ is a $k$-maximal
tree on a $k$-sound premouse $N$, and there is a drop of some kind along $b^\Tt$.

\begin{dfn}\label{dfn:m-standard}
Let $m<\om$ and let $M$ be an $(m+1)$-sound
premouse such that if $M$ is active short then $M$ is
Dodd-absent-sound. We define \emph{$(m+1)$-relevantly condensing} and \emph{$(m+1)$-sub-condensing} (as applied to $M$) just as in \cite[***Definition 2.1]{fullnorm},
except that we assume that for the premice $P$ considered,
if $P$ is active short then $P$ is Dodd-absent-sound.

Say that $M$ is \emph{short-Dodd-sub-condensing}
iff \emph{if} $M$ is active short
\emph{then} for all $P,\pi$, if
\begin{enumerate}
 \item 
$P$ is active short and Dodd-absent-sound,
\item
$\max(\rho_{\D}^P,\mu^{+P})$ is an $M$-cardinal,
where $\mu=\crit(F^P)$,
\item 
$\pi:P\to M$ is $0$-deriving,\footnote{See \cite{kappa-plus_v3} for the general definition of \emph{$0$-deriving}. In case $\crit(\pi)>\crit(F^P)$,
as is required by condition \ref{item:crit(pi)>=mu^+P,rho_D^P}, this is equivalent to requiring that $\pi$ be $\Sigma_0$-elementary with respect to the language of premice without $\dot{F}_J$.}
\item\label{item:crit(pi)>=mu^+P,rho_D^P} $\crit(\pi)\geq\max(\mu^{+P},\rho_{\D}^P)$, and
\item $\max(\rho_{\D}^P,\mu^{+P})<\rho_1^M$,
\end{enumerate}
 then $P\pins M$.

For $n\in\om\cup\{0^-\}$, a premouse $N$ is  \emph{$n$-standard} iff:
\begin{enumerate}[label=--]
\item $N$ is $n$-sound (recall that $0^-$-soundness is the empty requirement),
\item if $n=0^-$ then  $N$ is  active short,
\item if $N$ is active short
and $n\in\om$ then $N$ is Dodd-absent-sound,
\item if $n>0$ then $N$ is $(m+1)$-relevantly-condensing for every $m<n$, and
\item every $M\pins N$ is short-Dodd-sub-condensing and $(m+1)$-relevantly-condensing
and $(m+1)$-sub-condensing for each $m<\om$.\qedhere
\end{enumerate}
\end{dfn}

\begin{lem}
Let $n\in\om\cup\{0^-\}$.
Let $N$ be an $n$-sound,
$(n,\om_1,\om_1+1)^*$-iterable premouse. Suppose that if $N$ is active short and $n\neq 0^-$ then $N$ is Dodd-absent-sound.
Then $N$ is $n$-standard.
\end{lem}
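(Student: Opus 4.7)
The plan is to verify the five clauses of the definition of $n$-standard for $N$. The first three clauses -- $n$-soundness of $N$, activeness when $n=0^-$, and Dodd-absent-soundness when $N$ is active short and $n\in\om$ -- are either hypothesized directly or built into the degree convention of \cite{kappa-plus_v3}. What requires work are the condensation clauses: that $N$ is $(m+1)$-relevantly-condensing for every $m<n$ (when $n>0$), and that every $M\pins N$ is short-Dodd-sub-condensing, $(m+1)$-relevantly-condensing, and $(m+1)$-sub-condensing for every $m<\om$.

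For the condensation clauses on $N$ itself, I would cite the condensation theorems \cite[Theorems 3.17, 3.18]{kappa-plus_v3}. These are formulated at the $\kappa^+$-supercompact level and take as input an iterable target premouse together with a map $\pi\colon P\to N$ satisfying the shape hypotheses of relevantly-condensing (resp.\ sub-condensing); the conclusion is that $P\pins N$ (or the evident equivalent). The hypotheses of these definitions, in the modified form of Definition \ref{dfn:m-standard}, match the input form of the cited theorems, and the supplied $(n,\om_1,\om_1+1)^*$-iterability is exactly the strength that these theorems require at this level.

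For the clauses on proper initial segments $M\pins N$, one first checks that every such $M$ inherits sufficient iterability: a tree on $M$ may be regarded as a tree on $N$ whose extenders are drawn from $M$, so the $N$-strategy restricts to a strategy on $M$ at each appropriate degree $k<\om$. The condensation theorems of \cite{kappa-plus_v3} then apply to each $M\pins N$ and deliver $(m+1)$-relevantly-condensing and $(m+1)$-sub-condensing for every $m<\om$. The remaining property, short-Dodd-sub-condensing for active short $M$, follows from the Dodd-sensitive variant of condensation developed in \cite{kappa-plus_v3}, together with the fact that active short iterable premice are Dodd-absent-sound; the hypothesis $\max(\rho_{\D}^P,\mu^{+P})<\rho_1^M$ in the definition of short-Dodd-sub-condensing places $P$ precisely in the regime in which that variant yields $P\pins M$.

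The step most in need of care is short-Dodd-sub-condensing, since it is the clause specific to the long-extender/$\kappa^+$-supercompact setting and depends on the long-short decomposition $F^P=F'\circ G$ of active extenders described in the introduction. Once iterability has been passed to each $M\pins N$, the remaining clauses reduce, modulo standard bookkeeping, to direct invocations of the condensation machinery of \cite{kappa-plus_v3}.
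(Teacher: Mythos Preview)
Your proposal is correct and takes essentially the same approach as the paper: both reduce the condensation clauses of $n$-standardness to the condensation theorems \cite[Theorems 3.17, 3.18]{kappa-plus_v3}, with the short-Dodd-sub-condensing clause handled specifically by \cite[Theorem 3.18]{kappa-plus_v3}. The paper's proof is just a bare citation of these results, whereas you spell out the reduction in more detail (passing iterability to proper segments, etc.), but the content is the same.
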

\begin{proof}
This follows  from
condensation results in \cite{kappa-plus_v3}, in particular \cite[Theorems 3.17, 3.18]{kappa-plus_v3} (\cite[Theorem 3.18]{kappa-plus_v3} for the short-Dodd-sub-condensing aspect).\end{proof}

\begin{rem}\label{rem:condensation_def}
Remarks similar to those in \cite[Remark 2.2]{fullnorm}
hold here, though the analogue of \cite{fsfni_v5}
has not been worked out at the plus one level (nor for $\lambda$-indexing at the short level),
so the known proofs of condensation-like properties mostly rely on $(n,\om_1,\om_1+1)^*$-iterability.\end{rem}

\begin{dfn}
Let $N$ be an active short premouse. Then $\rho_{0^-+1}^N$ denotes $\max(\kappa^{+N},\rho_{\D}^N)$ where $\kappa=\crit(F^N)$.
\end{dfn}

We next define \emph{$(M,m)$-good}
and \emph{$(M,m)$-pre-good} sequences
$\vec{E}$ of extenders.
These hold of the sequence of extenders used along the main branch $b^\Tt$ of an $m$-maximal iteration tree $\Tt$, if there is no dropping of any kind along $b^\Tt$ (and more generally, along the tail of $b^\Tt$ within which there are no drops of any kind, if $m=\deg^\Tt_\infty$).

\begin{dfn}\label{dfn:(M,m)-good} Let $m\in\om\cup\{0^-\}$ and  $M$ be an $m$-standard premouse.
Let $\vec{E}=\left<E_\alpha\right>_{\alpha<\lambda}$ be a sequence of extenders.
We say that $\vec{E}$ is \emph{$(M,m)$-pre-good} iff
there is a sequence $\left<M_\alpha\right>_{\alpha\leq\lambda}$
such that:
\begin{enumerate}[label=--]
 \item $M_0=M$,
 \item for each $\alpha<\lambda$, we have:
 \begin{enumerate}
 \item $M_\alpha$ is an $m$-sound premouse,
 \item $E_\alpha$ is
 a weakly amenable  $M_\alpha$-extender, as defined in \cite[Definition 2.10***]{kappa-plus_v3},\item  letting $\gamma^{M_\alpha}=\lgcd(M_\alpha)$ if $M$ has a largest cardinal, and  $\gamma^{M_\alpha}=\OR^{M_\alpha}$
 otherwise, either:
 \begin{enumerate}
 \item $M$ is active short and $m=0^-$ and $\spc(E_\alpha)<\gamma^{M_\alpha}$, or
 \item $M$ is active short and $m=0$ and $\spc(E_\alpha)<\rho_{0^-+1}^{M_\alpha}=\max(\kappa^{+M_\alpha},\rho_{\D}^{M_\alpha})$, where $\kappa=\crit(F^{M_\alpha})$, or
 \item $M$ is active short and $m>0$
 and $\spc(E_\alpha)<\rho_m^{M_\alpha}$,
 or
 \item $M$ is not active short,
 and $\spc(E_\alpha)<\min(\rho_m^{M_\alpha},\gamma^{M_\alpha})$.
 \end{enumerate}
 \item  $M_{\alpha+1}=\Ult_m(M_\alpha,E_\alpha)$,
 \item if $E_\alpha$ is long then the short part of $E_\alpha$ is in $\es^{M_{\alpha+1}}$,
\end{enumerate}
 and
 \item for each limit $\gamma\leq\lambda$, 
$M_\gamma=\dirlim_{\alpha<\gamma}M_\alpha$, under the (compositions and direct 
limits of) the ultrapower maps.
\end{enumerate}
We write $\Ult_m(M,\vec{E})=M_\lambda$ and $i^{Mm}_{\vec{E}}$ for the
ultrapower map.
We say  $\vec{E}$ is \emph{$(M,m)$-good} iff $\vec{E}$ is $(M,m)$-pre-good
and $M_\lambda$ is wellfounded.
Suppose $\vec{E}$ is $(M,m)$-pre-good. For $\kappa\leq\rho_m^M$,  say
$\vec{E}$ is \emph{${<\kappa}$-space-bounded}
iff
$\spc(E_\alpha)<\sup i^{M,m}_{\vec{E}\rest\alpha}``\kappa$
 for each $\alpha<\lambda$;
 if $\kappa<\rho_m^M$, say
 $\vec{E}$ is \emph{$\kappa$-space-bounded}
 iff it is ${<(\kappa+1)}$-space-bounded.

We say  $\vec{E}$ is \emph{$(M,m)$-pre-pre-good}
iff either $\vec{E}$ is $(M,m)$-pre-good or there is $\gamma<\lh(\vec{E})$
such that $\vec{E}\rest\gamma$ is $(M,m)$-pre-good but not $(M,m)$-good.
\end{dfn}

\begin{lem}\label{lem:ult_pres_standard}
 Let $N$ be $n$-standard and $\vec{E}$ be $(N,n)$-good.
  Then
 $\Ult_n(N,\vec{E})$ is $n$-standard.
 \end{lem}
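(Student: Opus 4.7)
The plan is to proceed by induction on $\lh(\vec{E})$. The base case is vacuous; the limit case is handled by direct limits. The intermediate models $M_\alpha$ are $n$-standard by the induction hypothesis, so $M_\lambda=\dirlim_{\alpha<\lambda}M_\alpha$ inherits $n$-soundness, the ``$n=0^-$ implies active short'' clause, Dodd-absent-soundness when applicable, and each $(m+1)$-relevantly-condensing clause for $m<n$, since each of these is first-order over the appropriate $\rSigma$-structure and is preserved by continuous limits of $\rSigma_n$-elementary maps. For a proper segment $M\pins M_\lambda$, either $M$ has been stabilized at some tail stage $M_\alpha$ (so the condensation clauses are inherited from those of the corresponding segment of $M_\alpha$), or $M=i_{\alpha\lambda}(M')$ for some $M'\pins M_\alpha$ at a tail stage and the clauses transfer by elementarity of $i_{\alpha\lambda}$.

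The substantive work is the successor step, which reduces to the following. Suppose $P$ is $n$-standard, $E$ is an extender with $\langle E\rangle$ being $(P,n)$-good, and set $P^*=\Ult_n(P,E)$. The global clauses of $n$-standardness I would dispatch by standard fine-structural preservation under the degree-$n$ ultrapower: $n$-soundness via the usual construction of $p_n^{P^*}$ and its solidity witnesses from those of $p_n^P$; the active-short clause for $n=0^-$ is built into the definition of the degree-$0^-$ ultrapower in \cite{kappa-plus_v3}; Dodd-absent-soundness in the active-short case follows by copying over Dodd parameters and witnesses via $E$; and the $(m+1)$-relevantly-condensing clauses for $m<n$ transfer by elementarity of $i^{Pn}_E$, since any candidate failure in $P^*$ can be pulled back along $i^{Pn}_E$ to contradict the corresponding condensation property of $P$.

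The main obstacle is the final clause: every proper segment $M\pins P^*$ must be short-Dodd-sub-condensing, $(m+1)$-relevantly-condensing, and $(m+1)$-sub-condensing for all $m<\omega$. I would split $M\pins P^*$ into three groups:
\begin{enumerate}[label=\textup{(}\roman*\textup{)}]
\item if $\OR^M$ is small enough that $M\ins P$ directly (roughly, $\OR^M\leq\crit(E)^{+P}$ up to the protomouse-avoidance modification), then $M\pins P$ and its condensation properties follow from the $n$-standardness of $P$;
\item if $M=i^{Pn}_E(M_0)$ for some $M_0\pins P$ (or is an initial segment of such, cut at a height in the range of $i^{Pn}_E$), then each condensation property transfers from $M_0$ by elementarity of $i^{Pn}_E$;
\item if $E$ is long and $M$ arises from the protomouse-avoidance construction previewed in the introduction, with active extender of the form $F'\com G$ over a base $\Ult_0(P|\eta,E)$, then one uses that the short part $G\in\es^{P^*}$ (clause (e) of Definition \ref{dfn:(M,m)-good}) together with the short-Dodd-sub-condensing property of the relevant proper segments of $P$ to verify the sub-condensing clauses directly.
\end{enumerate}

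Case (iii) is precisely the novel difficulty at the $\kappa^+$-supercompact level; in the short-extender setting it does not arise. I expect the argument there to parallel the condensation analysis in \cite[Theorems 3.17, 3.18]{kappa-plus_v3}, with the key new input being the short-Dodd-sub-condensing clause on proper segments of $P$, which is exactly why that condition was included in the definition of $n$-standard.
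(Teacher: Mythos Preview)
Your overall plan is far more elaborate than the paper's, and your case~(iii) reflects a genuine misconception.

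The paper's proof is essentially a two-line citation. First, the condensation clauses in the definition of $n$-standard (that $N$ is $(m{+}1)$-relevantly-condensing for $m<n$, and that every $M\pins N$ is short-Dodd-sub-condensing and $(m{+}1)$-relevantly/sub-condensing for all $m$) are \emph{definable} over $N$; this is the content of Remark~\ref{rem:condensation_def}, which points to \cite[Remark~2.2]{fullnorm}. Since the degree-$n$ ultrapower map is sufficiently elementary, all of these transfer at once---there is no need to decompose segments $M\pins P^*$ into cases. Second, the only item not handled by straight elementarity is Dodd-absent-soundness of $P^*$ when $n=0$ and $P$ is active short; this is quoted from \cite[\S2.4]{kappa-plus_v3}, with the paper explicitly noting the subtle subcase where $E$ is long with $\crit(E)=\crit(F^P)$ and $\Ult_0(P,E)$ is formed avoiding the protomouse.

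Your case~(iii) is misplaced. Protomouse-avoidance affects only the definition of $F^{P^*}$; it does not produce any new proper segments. A proper segment $M\pins P^*$ is just a proper segment of $(P^*)^{\passive}=\Ult(P^{\passive},E)$, which is the ordinary ultrapower with no protomouse issue. So every such $M$ already lies in (the elementary image of) some $P|\alpha$, and its condensation properties follow from definability and elementarity as above; there is no third case. Conversely, the place where protomouse-avoidance \emph{does} matter---namely, Dodd-absent-soundness of $P^*$ itself when the ultrapower map is only $0$-deriving---is exactly the point your sketch (``copying over Dodd parameters and witnesses via $E$'') glosses over; the usual transfer argument does not apply directly there, which is why the paper cites \cite{kappa-plus_v3} and cross-references the proof of Lemma~\ref{lem:Ult_m_seg_Ult_n} (specifically Claims~\ref{clm:F^U_0_gend_by} and~\ref{clm:U_0_is_Dodd-absent-sound}) for the relevant computation. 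Finally, your cases (i)--(ii) as written do not literally exhaust all $M\pins P^*$ (segments whose height is not in $\rg(i^{Pn}_E)$ are not of the form $i^{Pn}_E(M_0)$), though this is easily repaired and is in any event subsumed by the definability argument.
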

\begin{proof}
This is a corollary of Remark \ref{rem:condensation_def},
together with the fact that if $N$ is active short and $n=0$, so $N$ is Dodd-absent-sound,
then $\Ult_n(n,\vec{E})$
is Dodd-absent-sound.
This is shown in \cite[***\S2.4]{kappa-plus_v3}. Note that it is possible here that, for example, $\lh(\vec{E})=1$
and $E_0$ is long with $\crit(E_0)=\crit(F^N)$,
in which case $\Ult_0(N,E_0)$
is formed so as to avoid the protomouse. But by $0$-goodness, $\spc(E_0)<\rho_{\D}^N$, so by calculations in \cite{kappa-plus_v3},
$\Ult_0(N,E_0)$ is Dodd-absent-sound. (Also cf.~the proof of Lemma \ref{lem:Ult_m_seg_Ult_n}.)
 \end{proof}

\begin{rem}
In the context of Lemma \ref{lem:ult_pres_standard},
it is possible that $N$ is active short, $n=0$ and $i^{N,0}_{\vec{E}}:N\to\Ult_n(N,\vec{E})$ fails to be a $0$-embedding (and likewise when $n=0^-$).
In fact, $i^{N,0}_{\vec{E}}$ fails to be a $0$-embedding iff there is some $\alpha<\lh(\vec{E})$ such that $\Ult_0(N_\alpha,E_\alpha)$ is formed avoiding the protomouse
(that is, $E_\alpha$ is long and $\crit(E_\alpha)=\crit(F^{N_\alpha})$).
This situation does not arise in the short extender context.
This can also be the case for iteration maps $i^{\Tt}_{\alpha\beta}:M^\Tt_\alpha\to M^\Tt_\beta$ for $0$-maximal trees on such $N$.
However, if $\Tt$ is a $k$-maximal
tree on a $k$-standard premouse $M$,
and $\alpha+1\leq^\Tt\beta$
and $(\alpha+1,\beta]^\Tt$ does not drop in any way,
and $\deg^\Tt_\beta=0$,
but there \emph{is} a drop of some kind in $[0,\beta]^\Tt$,
then $i^{*\Tt}_{\alpha+1,\beta}$ is a $0$-embedding.
(For otherwise $M^\Tt_\beta$ is active short and there is some $\gamma+1\in[\alpha+1,\beta]^\Tt$ such that $\Ult_0(M^{*\Tt}_{\gamma+1},E^\Tt_\gamma)$ is formed avoiding the protomouse.
Since $[0,\gamma+1]^\Tt$ drops in some way, this implies that $\rho_1(M^{*\Tt}_{\gamma+1})\leq\kappa^{+M^{*\Tt}_{\gamma+1}}$,
where $\kappa=\crit(E^\Tt_\gamma)$.
But then $\rho_{\D}(M^{*\Tt}_{\gamma+1})=0$ and $\deg^\Tt_{\gamma+1}=0^-$, a contradiction.)
\end{rem}
Analogous to \cite[Lemma 2.5]{fullnorm}, we have:

\begin{lem}\label{lem:Ult_m_seg_Ult_n}
Let $N$ be $n$-standard and $0\leq m<n<\om$ with $\rho_{m+1}^N=\rho_n^N$.
Let  $\vec{E}$ be an $(N,n)$-good sequence.
Then $\vec{E}$ is $(N,m)$-good.
Let $U_k=\Ult_k(N,\vec{E})$ for $k\in\{m,n\}$.
Then
$U_m\ins U_n$ and $\rho_{m+1}^{U_m}=\rho_{n}^{U_n}$.
\end{lem}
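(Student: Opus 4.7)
The plan is to proceed by induction on $\lh(\vec{E})$, maintaining at each $\alpha \leq \lh(\vec{E})$ the joint invariants that (i) $\vec{E} \rest \alpha$ is $(N,m)$-pre-good, (ii) writing $M^k_\alpha = \Ult_k(N, \vec{E} \rest \alpha)$ for $k \in \{m,n\}$, we have $M^m_\alpha \ins M^n_\alpha$, and (iii) $\rho^{M^m_\alpha}_{m+1} = \rho^{M^n_\alpha}_n$. The base case $\alpha = 0$ is the hypothesis $\rho^N_{m+1} = \rho^N_n$; wellfoundedness of $M^m_\lambda$ will follow at the end from $M^m_\lambda \ins M^n_\lambda$ together with the hypothesized wellfoundedness of $M^n_\lambda$.

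At a successor step $\alpha \to \alpha+1$, I would first verify that $E_\alpha$ meets the conditions to continue the $(N,m)$-pre-good sequence. Invariant (iii) gives $\spc(E_\alpha) < \rho^{M^n_\alpha}_n = \rho^{M^m_\alpha}_{m+1} \leq \rho^{M^m_\alpha}_m$, which delivers the $\rho_m$-based space bound required by Definition \ref{dfn:(M,m)-good} (and the $\gamma^{M^m_\alpha}$-bound in the non-short case, since $M^m_\alpha$ and $M^n_\alpha$ share the same largest cardinal on the relevant segment). The segment equality $M^m_\alpha | \rho^{M^m_\alpha}_{m+1} = M^n_\alpha | \rho^{M^n_\alpha}_n$ implied by (ii) and (iii) also forces $E_\alpha$ to be a weakly amenable $M^m_\alpha$-extender measuring the correct power sets. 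Propagating (ii) and (iii) to $\alpha+1$ then reduces to the single-step fine-structural claim that if $P$ is $n$-sound with $\rho^P_{m+1} = \rho^P_n$ and $E$ is a weakly amenable $P$-extender with $\spc(E) < \rho^P_n$, then $\Ult_m(P, E) \ins \Ult_n(P, E)$ with matching projecta. This is the direct analogue of \cite[***Lemma 2.5]{fullnorm}, proved by the standard comparison of reducts: the $\rSigma_{m+1}$- and $\rSigma_n$-definable sets of ordinals below $\rho^P_n$ coincide, so the two ultrapower decodings align strictly below the common projectum.

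At a limit $\gamma \leq \lh(\vec{E})$, the two directed systems of $m$- and $n$-ultrapowers come equipped with tail-compatible initial-segment inclusions, and this relation is preserved by direct limits, yielding $M^m_\gamma \ins M^n_\gamma$. The projectum equality passes to the limit since the iteration maps along each system are at least $n$-embeddings and therefore commute with the inclusion while preserving $\rho_{m+1}$ and $\rho_n$ up to the appropriate suprema. This closes the induction and gives $(N,m)$-goodness of $\vec{E}$ together with the desired conclusions at $\alpha = \lh(\vec{E})$.

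The main obstacle is the delicate long-extender case at the successor step: when $m = 0$, $M^0_\alpha$ is active short, and $E_\alpha$ is long with $\crit(E_\alpha) = \crit(F^{M^0_\alpha})$. Here $\Ult_0(M^0_\alpha, E_\alpha)$ is formed by composing the naive image extender with the short part of $E_\alpha$ to avoid a protomouse, whereas $\Ult_n(M^n_\alpha, E_\alpha)$ uses the larger class of $\rSigma_n$-definable functions and does not involve such an adjustment. I would need to check that the composed active extender of $M^0_{\alpha+1}$ still agrees, up to length $\rho^{M^0_{\alpha+1}}_1$, with a segment of the active extender of $M^n_{\alpha+1}$, so that $M^0_{\alpha+1} \ins M^n_{\alpha+1}$. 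This relies on the Dodd-absent-soundness preserved by Lemma \ref{lem:ult_pres_standard}, on the fact that $(N,0)$-goodness forces $\spc(E_\alpha) < \rho^{M^0_\alpha}_{\D}$, and on the Dodd-theoretic ultrapower calculations of \cite[***\S2.4]{kappa-plus_v3}, paralleling the computations referenced in the remark immediately following Lemma \ref{lem:ult_pres_standard}.
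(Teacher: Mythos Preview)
Your overall inductive framework and the reduction to a single-extender step with $n=m+1$ match the paper's approach, and the limit case is fine. The genuine gap is in the delicate case you flag at the end.

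When $N$ is active short, $m=0$, $n=1$, and $E$ is long with $\crit(E)=\kappa=\crit(F^N)$, the ultrapower $U_0=\Ult_0(N,E)$ avoids the protomouse, so $\crit(F^{U_0})=\kappa$; but $U_1=\Ult_1(N,E)$ is formed in the usual way, so $\crit(F^{U_1})=\lambda(E)>\kappa$. Hence $F^{U_0}$ does \emph{not} agree with any segment of the active extender $F^{U_1}$, and your proposed check cannot succeed as stated. More fundamentally, the paper explicitly notes (in a footnote to this lemma) that no direct proof that $U_0$ is $1$-sound is known in this case, so the ``standard comparison of reducts'' argument you invoke does not extend here; $1$-soundness of $U_0$ is obtained only as a consequence of $U_0\pins U_1$, not as an input to it.

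The paper's actual argument in this case is a condensation argument, not a direct extender-agreement computation. One first establishes that $U_0$ is Dodd-absent-sound with $\rho_{\D}^{U_0}=\rho_1^{U_0}=\sup i``\rho_1^N=\rho_1^{U_1}$ and $p_{\D}^{U_0}=i(p_{\D}^N)$, via a generation claim for $F^{U_0}$. Then one takes $Q\pins U_1$ with $F^Q=\bar{E}$ (the short part of $E$), sets $Q^*=i^{U_1,0}_{F^{U_1}}(Q)$, and observes that the natural factor map $\sigma:U_0\to U_1$ (which is \emph{not} $0$-lifting) has the same graph as a $0$-deriving map $\pi:U_0\to Q^*$, with $\crit(\pi)\geq\rho_{\D}^{U_0}$. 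Since $Q^*\pins U_1$ and $U_1$ is $0$-standard, the short-Dodd-sub-condensing clause of $n$-standardness applied to $Q^*$ yields $U_0\pins Q^*\pins U_1$. This use of the condensation hypothesis built into $n$-standardness is the missing ingredient in your proposal.
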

\begin{proof}
For simplicity assume that $\lh(\vec{E})=1$
and $n=m+1$.
The proof  is like that of \cite[Lemma 2.5]{fullnorm}, except for the case that $N$ is active short,
$m=0$, so $n=1$, and letting $\kappa=\crit(F^N)$,
then $\spc(E)=\kappa^{+N}$, so $\kappa^{+N}<\rho_1^N$.
Here $U_0=\Ult_0(N,E)$ is formed so as to avoid the protomouse, whereas $U_1=\Ult_1(N,E)$ is formed in the usual manner. Let
$\sigma:U_0\to U_1$
be the factor map. Note $\sigma$ is not $0$-lifting,
since $\crit(F^{U_0})=\kappa<\crit(F^{U_1})=\lambda(E)<\sup i``\rho_1^{U_0}\leq\crit(\sigma)$
where $i:N\to U_0$ is the ultrapower map.

Now $\kappa^{+N}<\rho_1^N$ and $N$ is $1$-sound and Dodd-absent-sound. So $\rho=\rho_1^N=\rho_{\D}^N$,
 $F^N$ is generated by $\rho\cup\{p_{\D}^N\}$
and $N$ is Dodd-absent-solid.
Recall that by \cite[***Lemma 2.20]{kappa-plus_v3},
$U_0$ is a premouse, and in particular,
satisfies the Jensen ISC.
Let $U'$ be the protomouse associated to $U_0$
(formed by the naive version of $\Ult(N,E)$).
Let $F'=F^{U'}$.

\begin{clm}\label{clm:F^U_0_gend_by}
$F^{U_0}$ is generated by $(\sup i``\rho)\cup\{i(p_{\D}^N)\}$.\end{clm}
\begin{proof}
We first observe that
\begin{equation}\label{eqn:rg(i)_generated}\rg(i)=\big\{i^{U_0,0}_{F^{U_0}}(f)(a,i(p_{\D}^N))\bigm|f\in U_0|\kappa^{+U_0}\text{ and }a\in(i``\rho)^{<\om}\big\}.\end{equation}
For given any $x\in N$, letting $f\in N|\kappa^{+N}$ and $a\in[\rho]^{<\om}$ be such that
\[ x=i^{N,0}_{F^N}(f)(a,p_{\D}^N),\]
then (recalling $F'=F^{U'}$, the active protomouse extender)
\[ i(x)=i^{U_0,0}_{F'}(i(f))(i(a),i(p_{\D}^N)).\]
But $i(f)=i^{U_0,0}_{F^{U_0}}(f)\rest i(\kappa)$,
so $i^{U_0,0}_{F'}(i(f))=i^{U_0,0}_{F^{U_0}}(f)$, 
so
\[ i(x)=i^{U_0,0}_{F^{U_0}}(f)(i(a),i(p_{\D}^N)), \]
showing the ``$\sub$'' direction of line  (\ref{eqn:rg(i)_generated}).
The converse
direction is clear,
since $i^{U_0,0}_{F^{U_0}}(f)=i(i^{N,0}_{F^N}(f))\in\rg(i)$ for each $f$ as there.
But every point in $U_0$ is easily produced from elements of $\rg(i)\cup G$ where $G$ is the set of generators of $E$, and $G\sub i(\kappa^{+M})=i(\kappa)^{+U_0}$, and so every point in $U_0$ is generated by $(i``\rho)\cup\{i(p_{\D}^N)\}\cup i(\kappa)^{+U_0}$, and hence by $(\sup i``\rho)\cup\{i(p_{\D}^N)\}$, as desired.\end{proof}

\begin{clm}\label{clm:U_0_is_Dodd-absent-sound}
$U_0$ is Dodd-absent-sound\footnote{For the other cases, as in the usual proof, one shows directly that $\Ult_m(N,E)$ is $(m+1)$-sound
(with $\rho_{m+1}^{\Ult_m(N,E)}=\sup i``\rho_{m+1}^N$. But in the case under consideration, the author does not know any direct proof that $\Ult_0(N,E)$ is $1$-sound.
 This is discussed in more detail in \cite[***just after Lemma 2.20]{kappa-plus_v3}. (But we will end up showing that $U_0=\Ult_0(N,E)\pins U_1$, so $U_0$ is in fact $1$-sound.)} with
 $\rho_1^{U_0}=\rho_{\D}^{U_0}=\sup i``\rho=\rho_1^{U_1}$ and $p_{\D}^{U_0}=i(p_{\D}^N)$.
\end{clm}
\begin{proof}
Since $\rho$ is an $N$-cardinal
and $\spc(E)<\rho$,
$\sup i``\rho$ is a $U_0$-cardinal.
If $\bar{F}\in N$ is an $N$-total fragment of $F^N$ then $i(\bar{F})$
is compatible with $F'$.
Thus, letting $\bar{E}$ be the short segment of $E$
(so $F^U=F'\com\bar{E}$),
then $i(\bar{F})(\bar{E})$ agrees with $F^{U_0}$ in the obvious manner. This yields the Dodd-absent-solidity witnesses corresponding to the claimed value of $\rho_{\D}^{U_0}$ and $p_{\D}^{U_0}$.
(For example, if $p_{\D}^N\neq\emptyset$,
then letting $\alpha=\max(p_{\D}^n)$,
and letting $\bar{F}=F^N\rest\alpha\in N$, 
then $i(\bar{F})(\bar{E})\rest i(\alpha)=F^{U_0}\rest\alpha\in U_0$.) But this suffices, by Claim \ref{clm:F^U_0_gend_by}. (The fact that $\rho_1^{U_0}=\rho_{\D}^{U_0}$ is easy, and shown in \cite[***Lemma 3.8]{kappa-plus_v3}.)

Finally, we have $\sup i``\rho=\rho_1^{U_1}$ since $i\rest\rho=j\rest\rho$ where $j:N\to\Ult_1(N,E)$
is the degree $1$ ultrapower map, and $\rho=\rho_1^N$.
\end{proof}

Now let $Q\pins U_1$ be the segment with $F^Q=\bar{E}$ (as above). So $\lambda<\OR^Q<\lambda^{+U_1}$
and $\rho_\om^Q=\lambda$ 
where $\lambda=\lambda(E)=i^{N,1}_{E}(\kappa)=\crit(F^{U_1})=\crit(F')$.
Also $Q\pins U_0$.
Let $Q^*=i^{U_1,0}_{F^{U_1}}(Q)$.
So $\lambda(F^{U_1})<\OR^{Q^*}<\OR^{U_1}$
and $\rho_\om^{Q^*}=\lambda(F^{U_1})$.
Let
$\sigma':U'\to U_1$
be the natural factor map
(so $\sigma,\sigma'$ have the same graph).
In particular, $\sigma'\com i'=i^{N,1}_E$
where $i':N\to U'$ is the naive ultrapower map
(this has the same graph as does the ultrapower map $i:N\to U_0$),
and $\crit(\sigma')\geq\sup i``\rho$.

Note  $\rg(\sigma')\sub Q^*$,
and in fact $\sigma'``\OR^{U'}$ is cofinal in $\OR^{Q^*}$.

Recalling  $(U')^{\passive}=(U_0)^{\passive}$, let
$\pi:U_0\to Q^*$
be the map whose graph is the same as those of $\sigma$ and $\sigma'$.
Because $F^{U_0}=F'\com F^Q$,
it is easy enough to see that
 $\pi$ is $0$-deriving.
 But $\rho_\om^{Q^*}=\lambda(F^{Q^*})=\lambda(F^{U_1})$, since this is a cardinal in $U_1$. Since $\rho_{\D}^{U_0}=\rho_1^{U_1}$ is a $U_1$-cardinal, and hence a $Q^*$-cardinal,
 and since $U_1$ is $0$-standard, it follows that $U_0\pins Q^*\pins U_1$, which suffices.
\end{proof}

The following is much like \cite[Definition 2.6]{fullnorm}, but  incorporating the degree $0^-$ for active short levels:
\begin{dfn}
Let $m,n\in \om\cup\{0^-\}$. Let $N$ be an $n$-standard premouse and $(M,m)\ins(N,n)$, where  if $m=0^-$ then $M$ is active short.
 The \emph{extended $((N,n),(M,m))$-dropdown}
 is the sequence $\left<(M_i,m_i)\right>_{i\leq k}$, with $k$ as large as 
possible,
 where $(M_0,m_0)=(M,m)$, and $(M_{i+1},m_{i+1})$ is the least 
$(M',m')\ins(N,n)$ such that either
 \begin{enumerate}[label=--]
  \item  $(M',m')=(N,n)$, or
  \item $(M_i,m_i)\pins(M',m')$ and $\rho_{m'+1}^{M'}<\rho_{m_i+1}^{M_i}$ and
 if $m'=0^-$ then $M'$ is active short.
  \end{enumerate}

 The \emph{reverse extended $((N,n),(M,m))$-dropdown} is 
$\left<(M_{k-i},m_{k-i})\right>_{i\leq k}$.

Abbreviate \emph{reverse extended} with \emph{revex}
and  \emph{dropdown} with 
\emph{dd}.
\end{dfn}
Analogous to \cite[Lemma 2.7]{fullnorm}, we have:

\begin{lem}\label{lem:ult_dropdown}
Let $n\in\om\cup\{0^-\}$
and let $N$ be $n$-standard,
where $N$ is active short if $n=0^-$.
Let $(M,m)\ins(N,n)$.
Let $\left<(M_i,m_i)\right>_{i\leq k}$ be the extended $((N,n),(M,m))$-dropdown.
Let $\vec{E}=\left<E_\alpha\right>_{\alpha<\lambda}$ be a
sequence
such that:
\begin{enumerate}[label=--]
\item $\vec{E}$ is $(M_i,m_i)$-good for each $i\leq k$,
\item $\vec{E}$ is $(M_i,m_i+1)$-good for each $i<k$ with $m_i\neq 0^-$,
\item $\vec{E}$ is $(M_i,0)$-good for each $i<k$ with $m_i= 0^-$,\footnote{Recall that if $m_i=0^-$, \emph{$(M_i,0)$-good} deals with $\rho_{0^-+1}^{M_i}$.}
\end{enumerate}
Let $U_i=\Ult_{m_i}(M_i,\vec{E})$, $M'=U_0$
and $N'=U_k$.
 Then $\left<(U_i,m_i)\right>_{i\leq k}$ is
  the extended $((N',n),(M',m))$-dropdown.
\end{lem}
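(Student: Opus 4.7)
The plan is to induct on $k$, reducing to showing that each consecutive pair $(U_i, m_i), (U_{i+1}, m_{i+1})$ is a legitimate adjacent step of the extended dropdown from $(M', m)$ to $(N', n)$. For each $i<k$ we need: (a) $(U_i, m_i) \pins (U_{i+1}, m_{i+1})$, with $U_i$ active short whenever $m_i = 0^-$; (b) $\rho_{m_{i+1}+1}^{U_{i+1}} < \rho_{m_i+1}^{U_i}$; and (c) no pair $(P, p)$ can be legitimately inserted strictly between $(U_i, m_i)$ and $(U_{i+1}, m_{i+1})$. That $N' = U_k$ is $n$-standard is supplied by Lemma \ref{lem:ult_pres_standard}, which also ensures each $U_i$ enjoys the structural properties required for the ``no skip'' argument below.

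For (a), the corresponding original step is of one of two forms: either $M_i \pins M_{i+1}$ strictly, or $M_i = M_{i+1}$ with $m_i < m_{i+1}$. In the first case, the Shift Lemma variants of \cite[\S2.7]{kappa-plus_v3} applied to $i^{M_{i+1}, m_{i+1}}_{\vec{E}} \rest M_i$ identify $U_i$ as a proper segment of $U_{i+1}$ at degree $m_i$; in the second, iterated application of Lemma \ref{lem:Ult_m_seg_Ult_n} yields it. Active-shortness when $m_i = 0^-$ is automatic, since $F^{U_i}$ is the ultrapower image of $F^{M_i}$. For (b), I use the identity $\rho_{m+1}^{\Ult_m(M, \vec{E})} = \sup i^{M,m}_{\vec{E}}``\rho_{m+1}^M$, valid in all subcases including the protomouse-avoiding one by Claim \ref{clm:U_0_is_Dodd-absent-sound}; the strict inequality then transfers from $(M_i, M_{i+1})$ to $(U_i, U_{i+1})$ using the space bounds $\spc(E_\alpha) < \rho_{m_i+1}^{M_\alpha}$ from $(M_i, m_i)$-goodness. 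For (c), if $(P, p)$ were strictly between, its projectum would lie below $\sup i``\rho_{m_i+1}^{M_i}$; pulling $P$ back along $i^{M_{i+1}, m_{i+1}}_{\vec{E}}$ via cofinality of the image on segments and the space bound on each $E_\alpha$ would produce a pair $(\bar P, \bar p)$ strictly between $(M_i, m_i)$ and $(M_{i+1}, m_{i+1})$ in the original dropdown, contradicting its minimality.

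The main obstacle is the protomouse-avoiding case of \cite[Definition 2.19]{kappa-plus_v3}, where some $E_\alpha$ is long with $\crit(E_\alpha) = \crit(F^{M_\alpha})$ and the degree-$0$ ultrapower map fails to be $0$-lifting. There the naive Shift Lemma does not directly deliver the segment identification needed for (a), and the pullback argument for (c) is not immediate either. As in the proof of Claim \ref{clm:U_0_is_Dodd-absent-sound}, one routes through the factor map $\sigma : \Ult_0(M_\alpha, E_\alpha) \to \Ult_1(M_\alpha, E_\alpha)$ and the auxiliary proper segment $Q^*$ carrying the short part of $E_\alpha$, using this to identify the correct segment and to transfer projectum identities through the degree-$0$ step. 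With this wrinkle accommodated, (a)--(c) go through as in the short-extender analogue \cite[Lemma 2.7]{fullnorm}.
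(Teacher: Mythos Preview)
Your overall shape is right (reduce to consecutive pairs, split into $M_i\pins M_{i+1}$ versus $M_i=M_{i+1}$, handle the protomouse-avoiding subcase separately), but two of your core steps are not justified as written, and they are exactly where the paper does real work.

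For (a) in the case $M_i\pins M_{i+1}$, the Shift Lemma (or the factor map $\pi:U_i\to i^{M_{i+1},m_{i+1}}_{\vec{E}}(M_i)$) only gives you an \emph{embedding} into a segment of $U_{i+1}$; it does not by itself identify $U_i$ as a segment of $U_{i+1}$. The paper closes this gap by invoking the condensation clauses packed into $n$-standardness: depending on whether $\pi``\rho_{m_i}^{U_i}$ is bounded or cofinal in $\rho_{m_i}$ of the target, one applies $(m_i{+}1)$-relevant condensing, $(m_i{+}1)$-sub-condensing, or (in the protomouse-avoiding subcase) short-Dodd-sub-condensing, each time to conclude $U_i\pins i^{M_{i+1},m_{i+1}}_{\vec{E}}(M_i)\pins U_{i+1}$. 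You mention $n$-standardness only in passing and attribute the segment identification to the Shift Lemma, which is not correct; the condensation step is the substance here and should be made explicit.

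For (c), your pullback argument is both unnecessary and, as stated, unclear: an arbitrary $(P,p)$ with $U_i\pins P\ins U_{i+1}$ need not lie in the range of $i^{M_{i+1},m_{i+1}}_{\vec{E}}$, and ``cofinality of the image on segments'' does not obviously produce a preimage with the right projectum. The paper's route is more direct: once you know $U_i\pins U_{i+1}$, you show that $\rho_{m_i+1}^{U_i}=\sup i``\rho_{m_i+1}^{M_i}$ is a \emph{cardinal} of $U_{i+1}$ (using that $\rho_{m_i+1}^{M_i}$ is an $M_{i+1}$-cardinal, together with continuity of the maps at the relevant point), and that $\rho_{m_{i+1}+1}^{U_{i+1}}<\rho_{m_i+1}^{U_i}\leq\rho_{m_{i+1}}^{U_{i+1}}$. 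These two facts immediately rule out any intermediate $(P,p)$. Finally, note that the endpoint step $i=k-1$ is not a special case of the above: there the strict inequality $\rho_{m_{i+1}+1}^{M_{i+1}}<\rho_{m_i+1}^{M_i}$ can fail (equality is allowed when $(M_k,m_k)=(N,n)$), and the paper handles this separately via two short claims depending on whether $M_{k-1}\pins N$ or $M_{k-1}=N$.
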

\begin{proof}
We may assume $k=0$. So suppose $k>0$. Note that it suffices to
prove the following, for each $i<k$:
\begin{enumerate}
 \item\label{item:U_i,m_i_pins_U_i+1,m_i+1} $(U_i,m_i)\pins (U_{i+1},m_{i+1})$.
 \item\label{item:dropdown_of_consec_pair} the extended
$((U_{i+1},m_{i+1}),(U_i,m_i))$-dropdown is 
$\left<(U_i,m_i),(U_{i+1},m_{i+1})\right>$,
 \item\label{item:strict_drop_pres}if $i+1<k$
 then
$\rho_{m_{i+1}+1}^{U_{i+1}}<\rho_{m_i+1}^{U_i}$.

\end{enumerate}

We just give the proof assuming that $\lambda=\lh(\vec{E})=1$;
the general case is then a straightforward induction on $\lh(\vec{E})$.
Let $E=E_0$ and $\delta=\spc(E)$. Note that for each $i<k$, we have
$\delta<\delta^{+N}=\delta^{+M_i}\leq\rho_{m_i+1}^{M_i}$,
and  $\delta^{+N}\leq\rho_n^N$.
Write $R=M_i$, $r=m_i$, $S=M_{i+1}$, $s=m_{i+1}$,
$R'=U_i$, and $S'=U_{i+1}$. So $(R,r)\pins(S,s)$.

To start with we prove clauses
\ref{item:U_i,m_i_pins_U_i+1,m_i+1}--\ref{item:strict_drop_pres}
assuming that $i+1<k$.

\begin{case} $R\pins S$.
 
 Let $\rho=\rho_{r+1}^R$. So $\rho=\rho_\om^R$ is a cardinal of $S$
and $\rho_{s+1}^S<\rho\leq\rho_s^S$
(and if $S$ is active short
and $s=0$ then $\rho_1^S<\rho\leq\rho_{0^-+1}^S$). Therefore the functions
$[\delta]^{<\om}\to\alpha$, for $\alpha<\rho$,
which are used in forming $R'=\Ult_r(R,E)$, are just those in $R|\rho$,
and also just those used in forming
$S'=\Ult_s(S,E)$.
So  letting $i^R:R\to R'$ and $i^S:S\to S'$ be the ultrapower maps and
$\pi:R'\to i^S(R)\pins S'$
the natural factor map, we have
\[ \rho_{r+1}^{R'}=\sup i^R``\rho=\sup i^S``\rho\leq\crit(\pi)\]
and
$\rho_{r+1}^{R'}\leq\sup i^S``\rho_s^S=\rho_s^{S'}$
(and  if $S$ is active short
and $s=0$ then $\rho_{r+1}^{R'}\leq\sup i^S``\rho_{0^-+1}^S=\rho_{0^-+1}^{S'}$),
 as in the proof of Lemma \ref{lem:Ult_m_seg_Ult_n}.
Moreover, $\rho_{r+1}^{R'}$ is a cardinal of $S'$, because $\rho$ is a cardinal 
of $S$,
and if $\rho=\gamma^{+S}$ then $\rho$ is regular in $S$
(and  as $\delta<\rho$,
even if $E$ is long, the ultrapower maps are  continuous at $\rho$).

\begin{scase} $R$ is active short, $r\in\{0,0^-\}$,
and $E$ is long with $\crit(E)=\crit(F^R)$.
Note that since $\delta=\spc(E)<\rho_1^R=\rho_\om^R$ (in fact $\spc(E)<\rho_{s+1}^S<\rho_1^R$) we have $\delta=\kappa^{+R}<\rho_1^R=\rho_{\D}^R$ (so in fact $r=0^-$).
Let $Q^*=i^{S}(F^R)(Q)$ where $Q\pins S'$
has $F^Q=\bar{E}=$ the short part of $E$.
Like in the proof of Lemma \ref{lem:Ult_m_seg_Ult_n},
there is a $0$-deriving  embedding
$\pi':R'\to Q^*$
whose graph is just that of $\pi$.
As there, and since $S'$ is $0$-standard
by  \ref{lem:ult_pres_standard}, we then get $R'\pins Q^*\pins S'$.
\end{scase}

\begin{scase} Otherwise.

Note that  either
\begin{enumerate}[label=--]
 \item $\pi$ satisfies the requirements for
 $(r+1)$-relevant
(if $\pi``\rho_r^R$ is bounded in $\rho_r^{i^S(R)}$),
or
\item $\pi$ satisfies the requirements for $(r+1)$-sub-condensing
(if $\pi``\rho_r^R$ is unbounded in $\rho_r^{i^S(R)}$ but $\rho_{r+1}^R<\rho_{r+1}^{i^S(R)}$),
 or
 \item $R'=i^S(R)$ and $\pi=\id$ (if $\pi``\rho_r^R$ is unbounded in $\rho_r^{i^S(R)}$ and $\rho_{r+1}^R=\rho_{r+1}^{i^S(R)}$).
 \end{enumerate}
But by Lemma \ref{lem:ult_pres_standard}, $S'$ is $0$-standard,
so
 $R'\ins i^S(R)\pins S'$.
\end{scase}

In both subcases,  $\rho_{s+1}^{S}<\rho$ and $\rho_{s+1}^{S'}=\sup i^S``\rho_{s+1}^S$,
so $\rho_{s+1}^{S'}<\rho_{r+1}^{R'}\leq\rho_s^{S'}$.
So parts \ref{item:U_i,m_i_pins_U_i+1,m_i+1}--\ref{item:strict_drop_pres} for 
this case  follow.
\end{case}

\begin{case} $R=S$.

So $r<s$, and note that either:
\begin{enumerate}[label=--]
\item $s>0$ and $\rho_{s+1}^S<\rho_s^S=\rho_{r+1}^S$,
or
\item  $s=0$, $r=0^-$, $S$ is active short, and letting $\kappa=\crit(F^S)$,
we have
\[ \rho_1^S<\rho_{0^-+1}^S=\kappa^{+S}<\rho_{0^-}^S=\rho_0^S=\OR^S.\]
\end{enumerate}
Lemma \ref{lem:Ult_m_seg_Ult_n} gives $R'\ins S'$ and note  that if $s>0$ then
\begin{equation}\label{eqn:R=S_ult_drop} 
\rho_{s+1}^{S'}<\rho_s^{S'}=\rho_{r+1}^{R'}\eqdef\rho',
\end{equation}
whereas if $s=0$ then $R'=S'$
(since $\Ult_0$ and $\Ult_{0^-}$ are identical) and
\begin{equation}\label{eqn:R=S_ult_drop_s=0}
\rho_{1}^{S'}<\rho_{0^-+1}^{S'}=(\kappa')^{+S'}\eqdef\rho'
\end{equation}
where $\kappa'=\crit(F^{S'})$.
Note that $\left<(R',r),(S',s)\right>$
is the extended dropdown of $((S',s),(R',r))$:
If $R'=S'$ this follows from lines (\ref{eqn:R=S_ult_drop})
and (\ref{eqn:R=S_ult_drop_s=0}) above;
if $R'\pins S'$ it is by line (\ref{eqn:R=S_ult_drop}) and because
$\rho'=\rho_{r+1}^{R'}=\rho_\om^{R'}=\rho_s^{S'}$ is a cardinal of $S'$.
\end{case}

This completes the proof of parts 
\ref{item:U_i,m_i_pins_U_i+1,m_i+1}--\ref{item:strict_drop_pres}
assuming that $i+1<k$.

Now consider the case that $i=k-1$.

\begin{clm}Suppose $M_{k-1}\pins N$. Then $\rho_{m_{k-1}+1}^{M_{k-1}}=\rho_\om^{M_{k-1}}$ is an $N$-cardinal, $U_{k-1}\pins N'$ and $\rho_{m_{k-1}+1}^{U_{k-1}}=\rho_\om^{U_{k-1}}$ is an $N'$-cardinal. Clauses \ref{item:U_i,m_i_pins_U_i+1,m_i+1} and \ref{item:dropdown_of_consec_pair} hold for $i=k-1$, and  moreover:
\begin{enumerate}[label=(\alph*)]
\item If $n=0$ and $N$ is active short then either:
  \begin{enumerate}[label=--]
  \item
$\rho_{0^-+1}^N=\rho_{m_{k-1}+1}^{M_{k-1}}$ and
$\rho_{0^-+1}^{N'}=\rho_{m_{k-1}+1}^{U_{k-1}}$,
  or
  \item
$\rho_{0^-+1}^N>\rho_{m_{k-1}+1}^{M_{k-1}}$ and
  $\rho_{0^-+1}^{N'}>\rho_{m_{k-1}+1}^{U_{k-1}}$.
  \end{enumerate}
\item If it is not the case that [$n=0$ and $N$ is active short] then either:
  \begin{enumerate}[label=--]
  \item
$\rho_n^N=\rho_{m_{k-1}+1}^{M_{k-1}}$
 and
   $\rho_n^{N'}=\rho_{m_{k-1}+1}^{U_{k-1}}$,
   or
  \item
$\rho_n^N>\rho_{m_{k-1}+1}^{M_{k-1}}$ and
  $\rho_n^{N'}>\rho_{m_{k-1}+1}^{U_{k-1}}$.
  \end{enumerate}
\end{enumerate}
\end{clm}
\begin{proof}
Suppose $M_{k-1}\pins N$. Then
$\rho\eqdef\rho_{m_{k-1}+1}^{M_{k-1}}=\rho_\om^{M_{k-1}}$
is an $N$-cardinal. We have $\rho_n^N\geq\rho$, because if $\rho_n^N<\rho$
then $n>0$, and letting $n'\in\om\cup\{0^-\}$ be least such that $\rho_{n'+1}^N<\rho$,
then $(N,n')$ should have been in the dropdown sequence, a contradiction. Similarly,
if $N$ is active short and $n=0$ then $\rho_{0^-+1}^N\geq\rho$, since otherwise $(N,0^-)$ should be in the dropdown sequence.

One now proceeds as in the case that $i+1<k$
to verify the rest of the claim.
\end{proof}

It just remains to deal with the case that $i=k-1$ and $M_{k-1}=N$.

\begin{clm}Suppose $M_{k-1}=N$, so $m_{k-1}<n$. Then $U_{k-1}\ins N'$ and  $\rho_{m_{k-1}+1}^{U_{k-1}}$ is an $N'$-cardinal. Clauses \ref{item:U_i,m_i_pins_U_i+1,m_i+1} and \ref{item:dropdown_of_consec_pair} hold for $i=k-1$, and  moreover:
 \begin{enumerate}[label=(\alph*)]
  \item If  $n>0$ then either:
  \begin{enumerate}[label=--]
   \item
 $\rho_n^N=\rho_{m_{k-1}+1}^N$  and
$\rho_n^{N'}=\rho_{m_{k-1}+1}^{U_{k-1}}$,
or
   \item
 $\rho_n^N>\rho_{m_{k-1}+1}^N$  and
$\rho_n^{N'}>\rho_{m_{k-1}+1}^{U_{k-1}}$.
\end{enumerate}

\item If $n=0$ then $m_{k-1}=0^-$, $M_{k-1}=N$ and
$U_{k-1}= N'$.
\end{enumerate}
\end{clm}
\begin{proof}
Suppose $n>0$. Then $\rho_n^N=\rho_{m_{k-1}+1}^N$, since if
 $\rho_n^N<\rho_{m_{k-1}+1}^N$ then again, there should have been another
element in the dropdown sequence.
Suppose instead $n=0$.
Then $m_{k-1}=0^-$,
so $U_{k-1}=U_k$,
since $\Ult_{0^-}$ and $\Ult_0$ are equivalent.

Using these observations, one proceeds as before to prove the claim.
\end{proof}
This completes the proof of the lemma.
\end{proof}

\section{Normalization}

We now work through normalization of iteration trees. The only portion of this in which there is really new content (over that in \cite{fullnorm}) is in the basic combinatorics of converting two sequences $\vec{G}$ and $\vec{F}$ of extenders, used along branches of iteration trees $\Tt$ and $\Uu$, where $(\Tt,\Uu)$ is a stack, into a single sequence $\vec{G}\cd\vec{F}$, used along a branch of the normalization of $(\Tt,\Uu)$.
We deal with this in \S\ref{sec:ult_comm}.

\subsection{Ultrapower commutativity}\label{sec:ult_comm}

The following lemma is just like \cite[Lemma 2.9]{fullnorm}, except that some of the extenders involved can be long. It deals with the normalization of the sequence $\vec{G}\cd\vec{F}$ as mentioned above, in which $\vec{G}$ only consists of a single extender $G$ (and in the notation in the lemma, instead of ``$\vec{F}$'', we work with $\vec{E}\conc\vec{F}$, so we are considering converting the pair $(\left<G\right>,\vec{E}\conc\vec{F})$ into the single sequence $\left<G\right>\cd(\vec{E}\conc\vec{F})$).

\begin{lem}\label{lem:extender_comm}
 See Figure \ref{fgr:extender_comm}, which is replicated from \cite{fullnorm}. Let $m\in\om\cup\{0^-\}$. Let $M,P$ be premice, with $M$ being $m$-standard
 and $P$ active.
 If $F^P$ is short, let $p=0^-$, and if $F^P$ is long, let $p=0$.
 Let
 $G=F^P$ and $\delta=\spc(G)$. Suppose $\delta^{+M}<\OR^M$,
 $M|\delta^{+M}=P|\delta^{+P}$
 and $\delta<\rho_m^M$.
 Suppose that if $M$ is active short and $m=0$ then also $\delta<\rho_{0^-+1}^M$. Let
$U=\Ult_m(M,G)$
and suppose $U$ is wellfounded. 
 
 Let $\vec{E}$ be $(M,m)$-good, $(P,p)$-good
 and $\delta$-space-bounded.
  Let
  \[ M_{\cd}=\Ult_m(M,\vec{E})\text{ and 
}P_\cd=\Ult_0(P,\vec{E})\text{ and }
  G_\cd=F^{P_\cd}, \]
so   
\[ \delta_{\cd}\eqdef 
i^{M,m}_{\vec{E}}(\delta)=i^{P,0}_{\vec{E}}(\delta)=\spc(G_\cd)<
\rho_m^{M_{\cd}}. \]
If $\vec{E}$ is $(U,m)$-pre-good, also let
$U_{\cd}=\Ult_m(U,\vec{E})$.

  Let $\vec{F}$ be $(P_\cd,p)$-good with
$\delta_{\cd}<\spc(F_\alpha)$ for each $\alpha<\lh(\vec{F})$.
Let $\vec{D}=\vec{E}\conc\vec{F}$. Let
\[ P^\cd=\Ult_0(P,\vec{E}\conc\vec{F})\text{ and }
G^\cd=F^{P^\cd}.\]
Let 
 $\delta^{\cd}=\spc(G^{\cd})=\spc(G_{\cd})=\delta_{
\cd}$.

If $\vec{E}\conc\vec{F}$ is $(U,m)$-pre-good, also let
$U^{\cd}=\Ult_m(U,\vec{E}\conc\vec{F})=\Ult_m(U_{\cd},\vec{F})$.

Let
 \[ \wt{U}_{\cd}=\Ult_m(M_{\cd},G_\cd)\text{ and 
}\wt{U}^{\cd}=\Ult_m(M_{\cd},G^\cd)\]
\tu{(}see part \ref{item:M',N'_agmt} below\tu{)},
and suppose $\wt{U}^{\cd}$ is 
wellfounded.  Then:
 \begin{enumerate}
   \item $\vec{E}\conc\vec{F}$ is $(U,m)$-good.
   \item\label{item:M',N'_agmt} $M_{\cd}||
   \delta_{\cd}^{+{M_{\cd}}}
=P_\cd|\delta_{\cd}^{+P_\cd}
=P^\cd|(\delta^{\cd})^{+P^\cd}$
   \tu{(}so $\wt{U}_{\cd}$ and $\wt{U}^{\cd}$
   are well-defined\tu{)},
  \item $U_{\cd}=\wt{U}_{\cd}$ and 
$U^\cd=\wt{U}^{\cd}$.
  \item\label{item:maps_commute} The various ultrapower maps commute,
  as indicated in Figure \ref{fgr:extender_comm}; that is,
  \[ i^{U,m}_{\vec{E}\conc\vec{F}}\com 
i^{M,m}_G=
i^{\wt{U}_{\cd},m}_{\vec{F}}\com 
i^{M_{\cd},m}_{G_\cd}\com i^{M,m}_{\vec{E}}=
i^{M_{\cd},m}_{G^\cd}\com 
i^{M,m}_{\vec{E}}.\]
   \item\label{item:ult_map_is_SL_map} The hypotheses for the Shift Lemma\footnote{That is, the relevant version of the Shift Lemma \cite[Shift Lemmas I--IV (***2.46--2.49)]{kappa-plus_v3}.} hold
with respect to $(M,P)$,
$(M_{\cd},P^\cd)$,
   and the maps
   $i^{M,m}_{\vec{E}}:M\to M_{\cd}$ and
    $i^{P,0}_{\vec{E}\conc\vec{F}}:P\to P^\cd$.
    Moreover, $i^{U,m}_{\vec{E}\conc\vec{F}}$ is just the Shift Lemma map.
 \end{enumerate}
\end{lem}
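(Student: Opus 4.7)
The plan is to mirror the proof of \cite[Lemma 2.9]{fullnorm}, with care taken for two features absent from the short-extender context: (i) some of $G$, $E_\alpha$, $F_\beta$ may be long, and (ii) the ``avoid the protomouse'' construction from \cite[Definition 2.19]{kappa-plus_v3} may be invoked when forming certain degree-$0$ ultrapowers. I would organize the argument as parts (2), (5), then (3), (1), (4).

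First, part \ref{item:M',N'_agmt}: agreement of initial segments. Because $\vec{E}$ is $\delta$-space-bounded, an induction on $\alpha<\lh(\vec{E})$ shows that at each stage the iterates $\Ult_m(M,\vec{E}\rest\alpha)$ and $\Ult_p(P,\vec{E}\rest\alpha)$ agree up through $\delta_\alpha^{+}$, where $\delta_\alpha=i^{M,m}_{\vec{E}\rest\alpha}(\delta)$: each $E_\alpha$ uses only functions whose space is a cardinal strictly below $\delta_\alpha$, and those functions together with the measure-one sets they are applied to lie in the common part. Taking the limit over $\vec{E}$ yields $M_\cd||\delta_\cd^{+M_\cd}=P_\cd|\delta_\cd^{+P_\cd}$. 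The $\vec{F}$-ultrapowers, all having space above $\delta_\cd$, fix this initial segment, so $P_\cd|\delta_\cd^{+P_\cd}=P^\cd|(\delta^\cd)^{+P^\cd}$ as well.

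Second, part \ref{item:ult_map_is_SL_map}: verify the hypotheses of the appropriate Shift Lemma from \cite[Shift Lemmas I--IV (***2.46--2.49)]{kappa-plus_v3} for the pair of embeddings $i^{M,m}_{\vec{E}}:M\to M_\cd$ and $i^{P,0}_{\vec{E}\conc\vec{F}}:P\to P^\cd$. The premouse-agreement hypothesis is precisely part \ref{item:M',N'_agmt}; the maps have the correct $m$-lifting and $0$-deriving character by construction; compatibility of $G=F^P$ with its image is automatic. Whether $G$ is short or long (and likewise for extenders that may trigger protomouse avoidance along the way) dictates which Shift Lemma variant applies. Then the Shift Lemma produces an $m$-lifting embedding $\sigma:U\to\wt{U}^\cd$ acting by
\[ i^{M,m}_G(f)(a)\mapsto i^{M_\cd,m}_{G^\cd}\bigl(i^{M,m}_{\vec{E}}(f)\bigr)\bigl(i^{P,0}_{\vec{E}\conc\vec{F}}(a)\bigr). \]
Since $\wt{U}^\cd$ is wellfounded by hypothesis and $\sigma$ is $m$-elementary, unravelling the definitions shows $\sigma$ coincides with the putative ultrapower map $i^{U,m}_{\vec{E}\conc\vec{F}}$; this yields both the wellfoundedness needed for $(U,m)$-goodness of $\vec{E}\conc\vec{F}$ (part 1) and the identification $U^\cd=\wt{U}^\cd$ (part 3). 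The parallel identification $U_\cd=\wt{U}_\cd$ is the same argument with $\vec{F}$ empty. Commutativity (part \ref{item:maps_commute}) then reads off from the Shift Lemma formula together with the standard decomposition $i^{U,m}_{\vec{E}\conc\vec{F}}=i^{U_\cd,m}_{\vec{F}}\com i^{U,m}_{\vec{E}}$.

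The main obstacle I expect lies in the long-extender subcases. When $G$ is long, or when some $E_\alpha$ or $F_\beta$ is long with critical point equal to that of the active extender on the model to which it is applied, the degree-$0$ ultrapowers forming $P_\cd$, $P^\cd$, $\wt{U}_\cd$, $\wt{U}^\cd$ must be the protomouse-avoiding ones of \cite[Definition 2.19]{kappa-plus_v3}, so e.g.\ $G^\cd=F^{P^\cd}$ is not the naive image of $G$ but a composition $F'\com\bar{G}$ with a short part, as described in the introduction. I therefore anticipate a case split in the spirit of the proof of Lemma \ref{lem:ult_dropdown}, checking in each subcase that the Shift Lemma output matches the protomouse-avoiding active extender and that the required elementarity and agreement are preserved. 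Lemma \ref{lem:ult_pres_standard} and the commutativity calculations in \cite[***\S2.4]{kappa-plus_v3} should render each subcase routine.
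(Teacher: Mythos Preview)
Your proposal is correct and takes essentially the same approach as the paper: both follow \cite[Lemma 2.9]{fullnorm} and isolate the protomouse-avoidance configurations as the only new content, with the case split you anticipate. The paper's organization differs only cosmetically---it first observes that the passive parts $(U_\cd)^{\passive}=(\wt{U}_\cd)^{\passive}$ etc.\ and the commutativity of maps are already handled by \cite{fullnorm}, and then in each protomouse case directly verifies $F^{U_\cd}=F^{\wt{U}_\cd}$ by tracking the factored form $F^{U}=F^{U^*}\com F^{U|\eta}$ through the iteration (the most delicate case being when $G$ is long with $\crit(F^M)=\crit(G)$ and some $E_\alpha$ also triggers avoidance), which is exactly what your ``Shift Lemma output matches the protomouse-avoiding active extender'' check amounts to.
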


\begin{figure}
\centering
\begin{tikzpicture}
 [mymatrix/.style={
    matrix of math nodes,
    row sep=1.6cm,
    column sep=1.2cm}
  ]
   \matrix(m)[mymatrix]{
 U^{\cd}=\wt{U}^{\cd}&{}&P^\cd\\
 U_{\cd}=\wt{U}_{\cd} &M_{\cd}  &P_\cd   \\
U         &M       & P \\
};

\path[->,font=\scriptsize]
(m-3-1) edge node[left] {$\vec{E},m$} (m-2-1)
(m-2-1) edge node[left] {$\vec{F},m$} (m-1-1)
(m-3-2) edge node[below] {$G,m$} (m-3-1)
(m-3-2) edge node[left] {$\vec{E},m$} (m-2-2)
(m-2-2) edge node[below] {$G_\cd,m$} (m-2-1)
(m-2-2) edge node[right,pos=0.6] {$\ \ G^\cd,m$} (m-1-1)
(m-3-3) edge node[right] {$\vec{E},0$} (m-2-3)
(m-2-3) edge node[right] {$\vec{F},0$} (m-1-3)
(m-3-1) edge[bend left=65] node[left] {$\vec{D},m$} (m-1-1)
(m-3-3) edge[bend left] node[left] {$\vec{D},0$} (m-1-3)
;
\end{tikzpicture}
\caption{Extender commutativity. The diagrams commute,
where $\vec{D}=\vec{E}\conc\vec{F}$,
and a label $\vec{C},k$ denotes
a degree $k$ abstract iteration map given by $\vec{C}$.} 
\label{fgr:extender_comm}
\end{figure}
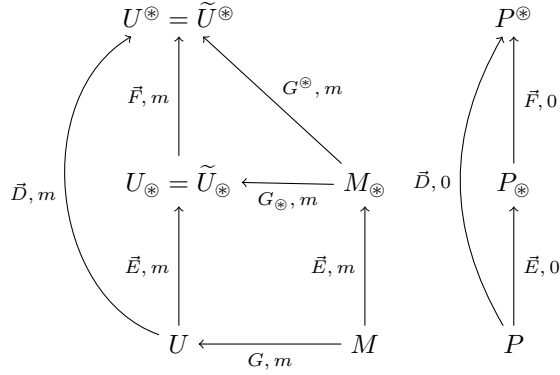
\begin{proof}
The proof is similar to that of \cite[Lemma 2.9]{fullnorm}, and essentially identical
unless
\begin{enumerate}[label=(\roman*)]
 \item\label{item:M_short_and_m=0,0-} $M$ is active short and $m\in\{0,0^-\}$
 and some ultrapower which is a component (via a single $E_\alpha$) of $\Ult_m(M,\vec{E})$ is formed avoiding a protomouse,
 or the ultrapower $\Ult(M,G)$
 is formed avoiding a protomouse,
 or
 \item\label{item:P_short} $P$ is active short and some ultrapower
 which is a component of $\Ult_0(P_{\cd},\vec{F})$
 is formed avoiding a protomouse
\end{enumerate}
 (note that because $\vec{E}$ is $\delta$-space bounded (where $\delta=\spc(F^P)$), no component of $\Ult_0(P,\vec{E})$ is formed avoiding a protomouse).
 
 \begin{case}Neither Case \ref{item:P_short}
 nor Case \ref{item:M_short_and_m=0,0-}
 attains.
 
 Then use the proof of \cite[Lemma 2.9]{fullnorm}.
 \end{case}

 \begin{case}\label{case:P_short_not_M_short_m=0,0-} Case \ref{item:P_short}
 attains, but Case \ref{item:M_short_and_m=0,0-}
 does not.
 
 Here the component of the lemma
 dealing with $\vec{E}$ is also
 as in \cite{fullnorm},
 so we may assume $\vec{E}=\emptyset$.
 Note that the extender derived from $i^{U,m}_{\vec{F}}$ is short with space $\crit(F^P)$,
 and that it is in fact identical
 to $F^{P^{\cd}}$; this uses the manner in which
 ultrapowers are formed when they avoid a protomouse. Using this observation, the rest of the proof is as in
  \cite{fullnorm}, except for part \ref{item:ult_map_is_SL_map}: here we don't have full agreement of $i^{M,m}_{\vec{E}}$
  with $i^{P,0}_{\vec{E}\conc\vec{F}}$
  over $\kappa^{+P}$ where $\kappa=\crit(F^P)$,
  since in fact $i^{M,m}_{\vec{E}}(\kappa)<i^{P,0}_{\vec{E}\conc\vec{F}}(\kappa)$. (Moreover, with our simplifying assumption that $\vec{E}=\emptyset$, $i^{M,m}_{\vec{E}}=\id$, but $\crit(i^{P,0}_{\vec{E}\conc\vec{F}})=\kappa$.)
  But note that the hypotheses of Shift Lemma IV \cite[***2.49]{kappa-plus_v3} hold, and $i^{U,m}_{\vec{E}\conc\vec{F}}$ is just the resulting map (and part \ref{item:ult_map_is_SL_map}
  refers to this version of the Shift Lemma in this case).
 \end{case}

 It remains to deal with  Case \ref{item:M_short_and_m=0,0-}.
 \begin{case} Case \ref{item:M_short_and_m=0,0-} attains, $F^P$ is long and $\crit(F^M)=\crit(F^P)$.

 Note that the previous cases already establish the instance of the lemma given by replacing $M$ with $M^{\passive}$ (and hence $U$ with $U^{\passive}$, etc).
So the only things we need to verify here are that $F^{U_{\cd}}=F^{\widetilde{U}_{\cd}}$ and $F^{U^{\cd}}=F^{\widetilde{U}^{\cd}}$,
and the relevant version of the Shift Lemma applies.\footnote{That is,
we already know that the Shift Lemma applies if we replace $M$ with $M^{\passive}$, etc, but we must now also consider the elementarity of the maps with respect to the active extenders.}

 Since $F^P$ is long, Case \ref{item:P_short}
 does not attain.
 
 Note that because $\Ult_0(M,\vec{E})$ is wellfounded, there are only finitely many $\alpha<\lh(\vec{E})$
 such that $E_\alpha$ is long with $\crit(E_\alpha)=i^{M,0}_{\vec{E}\rest\alpha}(\crit(F^M))$. Let these be $\alpha_0<\alpha_1<\ldots<\alpha_{n-1}$, and suppose for now that
 $n>0$.
 
 We have $U=\Ult_0(M,G)$, which is formed to avoid a protomouse.
 Let $\eta=\sup i^{M,0}_{G}``\delta$
 (recall $\delta=\kappa^{+M}$ where $\kappa=\crit(F^M)=\crit(F^P)$).
 So \begin{equation}\label{eqn:F^U=}F^U=F^{U^*}\com F^{U|\eta},\end{equation}
 where $U^*$ is the protomouse associated to $U$,
 and so
 \begin{equation}\label{eqn:F^U*=}F^{U^*}=\bigcup i^{M,0}_G``F^M\end{equation}
 We have that $F^U$ is short with $\kappa=\crit(F^U)$
 and $\delta=\kappa^{+U}$.
 
 For $\alpha\leq\lh(\vec{E})$
 write $U_\alpha=\Ult_0(U,\vec{E}\rest\alpha)$
 and $j_{\alpha\beta}:U_\alpha\to U_\beta$ for the ultrapower map (note $U_0=U$).
 Now $j_{0\alpha_0}$
 is continuous at $\delta$ and at $\eta$,
 and
 \begin{equation}\label{eqn:F^U_alpha_0}F^{U_{\alpha_0}}=\bigcup j_{0\alpha_0}``F^U.\end{equation}
 Now the ultrapower $\Ult_0(U_{\alpha_0},E_{\alpha_0})$ avoids a protomouse, so
 $j_{\alpha_0,\alpha_0+1}$ is discontinuous
 at $j_{0\alpha_0}(\delta)$ and
 so \[ F^{U_{\alpha_0+1}}=F^{U_{\alpha_0+1}^*}\com F^{U_{\alpha_0+1}|\eta'},\]
where $\eta'=\sup j_{\alpha_0,\alpha_0+1}``j_{0\alpha_0}(\delta)$ and $U_{\alpha_0+1}^*$
 is the protomouse associated to $U_{\alpha_0+1}$
 (or more precisely, associated to the formation of this as the ultrapower of $U_{\alpha_0}$ by $E_{\alpha_0}$; we do not mean to imply that the protomouse is uniquely determined by the structure $U_{\alpha_0+1}$), so
 \[ F^{U_{\alpha_0+1}^*}=\bigcup j_{\alpha_0,\alpha_0+1}``F^{U_{\alpha_0}}.\]
 
 But putting these things together, we get that
 \[ F^{U_{\alpha_0+1}}=F^*\com F^{U_{\alpha_0+1}|\eta''},\]
 where $\eta''=\sup j_{0,\alpha_0+1}``\eta$
 and $F^*=\bigcup j_{0,\alpha_0+1}``F^{U^*}$;
 this is because 
 \[ i^{F^{U_{\alpha_0+1}|j_{0,\alpha_0+1}(\eta)}}(\eta')=\eta'' \]
 and \[i^{F^{U_{\alpha_0+1}|j_{0,\alpha_0+1}(\eta)}}(F^{U_{\alpha_0+1}|\eta'})=F^{U_{\alpha_0+1}|j_{0,\alpha_0+1}(\eta)}\com F^{U_{\alpha_0+1}|\eta'}.\]

Proceeding in this manner through $n$ steps,
and letting $j:M\to U_{\cd}$ be $j=i^{U,0}_{\vec{E}}\com i^{M,0}_G$, we get that
\begin{equation}\label{eqn:F^U_cd=} F^{U_{\cd}}=F^{**}\com F^{U_{\cd}|\gamma},\end{equation}
where
$F^{**}=\bigcup j``F^M$
and
$\gamma=\sup j``\delta$.

Similarly (but slightly more simply),
letting $i:M\to\Ult_0(M,\vec{E})$
be the ultrapower map, we have
\[ F^{M_{\cd}}=F'\com F^{M_{\cd}|\xi},\]
where $F'=\bigcup i``F^M$
and $\xi=\sup i``\delta$.

Now as remarked at the start of this case,
we already know that $(\widetilde{U}_{\cd})^{\passive}=(U_{\cd})^{\passive}$
and the diagram commutes; that is,
$\Ult_0(U^{\passive},\vec{E})=\Ult_0((M_{\cd})^{\passive},F^{P_{\cd}})$
and $k\com i=j$ where $k:M_{\cd}\to\Ult_0(M_{\cd},F^{P_{\cd}})$ is the ultrapower map. So \begin{equation}\label{eqn:proto_exts_agree_U,U-tilde}\bigcup k``F'=\bigcup k\com i``F^M=\bigcup j``F^M=F^{**}\end{equation}
and $\gamma=\sup j``\delta=\sup k\com i``\delta=\sup k``\xi$.

Now $\crit(F^{P_{\cd}})=i^{P,0}_{\vec{E}}(\kappa)>\crit(F^{M_{\cd}})$ (as $n>0$).
So $\widetilde{U}_{\cd}=\Ult_0(M_{\cd},F^{P_{\cd}})$
is formed in the usual manner,
and so
\[ F^{\widetilde{U}_\cd}=\bigcup k``F^{M_{\cd}}=\Big(\bigcup k``F'\Big)\com\bigcup k``F^{M_\cd|\xi},\]
but $k$ is continuous at $\xi$
and $k(\xi)=\gamma$, so $\bigcup k``F^{M_\cd|\xi}=F^{U_{\cd}|\gamma}$,
which by lines (\ref{eqn:F^U_cd=}) and (\ref{eqn:proto_exts_agree_U,U-tilde})  gives $F^{\widetilde{U}_{\cd}}=F^{U_{\cd}}$, as desired.
Finally for part \ref{item:ult_map_is_SL_map}, note that the hypotheses of Shift Lemma II hold (\cite[***Lemma 2.47]{kappa-plus_v3}), using its clause (b), and $i^{U,m}_{\vec{E}\conc\vec{F}}$ is just the resulting map.

Now suppose  instead that $n=0$. Then $i$ is continuous at $\delta$ and
\[ F^{M_{\cd}}=\bigcup i``F^M,\]
and in particular, $\crit(F^{M_{\cd}})=i(\kappa)=\crit(F^{P_{\cd}})$,
so $\Ult_0(M_{\cd},F^{P_{\cd}})$ is formed avoiding the protomouse (as is $U=\Ult_0(M,G)$).
A simplification of the $n>0$ calculation gives  $F^{U_{\cd}}=\big(\bigcup j``F^M\big)\com F^{U_{\cd}|\gamma}$
where 
 $\gamma=\sup j``\delta=\sup i^{U,0}_{\vec{E}}``\eta=i^{U,0}_{\vec{E}}(\eta)$,
and that $F^{\widetilde{U}_{\cd}}=\big(\bigcup k``F^{M_{\cd}}\big)\com F^{U_{\cd}|\gamma'}$,
where $\gamma'=\sup k``i(\delta)$.
But  $\gamma=\gamma'$,
and since $j=k\com i$,
this gives  $F^{U_{\cd}}=F^{\widetilde{U}_{\cd}}$.

So we have shown that $F^{U_{\cd}}=F^{\widetilde{U}_{\cd}}$. We now want to see that $F^{U^{\cd}}=F^{\widetilde{U}^{\cd}}$. But  because we have already established everything regarding $\vec{E}$,
and in the present case, 
Case \ref{item:P_short} does not attain,
the calculation involving $\vec{F}$ is just as in \cite{fullnorm}.
And for part \ref{item:ult_map_is_SL_map}, we use Shift Lemma I  (\cite[***Lemma 2.46]{kappa-plus_v3}),
clause (b).
 \end{case}

 \begin{case} Case \ref{item:M_short_and_m=0,0-} attains and 
  if $G=F^P$ is long then $\crit(F^M)\neq\crit(F^P)$.

Again
we just need to verify here  that $F^{U_{\cd}}=F^{\widetilde{U}_{\cd}}$ and $F^{U^{\cd}}=F^{\widetilde{U}^{\cd}}$ and the Shift Lemma applies appropriately.

 We first claim $\crit(F^M)<\crit(F^P)$.
For clearly $\crit(F^M)\leq\crit(F^P)$.
Suppose
 $\crit(F^M)=\crit(F^P)$. Then by case hypothesis, $F^P$ is short, so $\delta=\kappa=\crit(F^M)=\crit(F^P)$.
 Then since some $\Ult_0(M_\alpha,E_\alpha)$ is formed avoiding the protomouse, note that $\vec{E}$ is not $\delta$-space-bounded, a contradiction.
 
 Now both $\Ult_0(M,F^P)$
 and $\Ult_0(M_{\cd},F^{P_{\cd}})$
 are formed in the usual manner,
 so we only have the (finitely many)
 component ultrapowers $\Ult_0(M_{\alpha_i},E_{\alpha_i})$ of $\Ult_0(M,\vec{E})$ formed avoiding a protomouse,
 say for $i<n$, where $n<\om$.
 We have $\crit(F^U)=\crit(F^M)<\crit(F^P)$
 and $F^U$ is short,
 and so $\Ult_0(U_{\alpha_i},E_{\alpha_i})$
 is also formed avoiding a protomouse
 for $i<n$,
 and only these components of $\Ult_0(U,\vec{E})$
 are formed in this manner. But now a calcluation
 simpler than the previous case gives that $F^{U_{\cd}}=F^{\widetilde{U}_{\cd}}$, as desired.

 So we have $U_{\cd}=\widetilde{U}_{\cd}$,
 completing the calculation for $\vec{E}$.
 As before, the calculation for $\vec{F}$
 is  as in \cite{fullnorm}
 unless Case \ref{item:P_short} attains, so suppose it does. So $F^P$ is short,
 as is $F^{P_{\cd}}$,
 and some component of $\Ult_0(P_{\cd},\vec{F})$
 is formed avoiding a protomouse. But then the calculation for $\vec{F}$ is just like that in Case \ref{case:P_short_not_M_short_m=0,0-} (slightly generalized to allow $\vec{E}\neq\emptyset$). For part \ref{item:ult_map_is_SL_map},  use Shift Lemma II  (\cite[***2.47]{kappa-plus_v3}).\qedhere
 \end{case}
\end{proof}

As in \cite[***2.11, 2.12]{fullnorm},
we next want to generalize the preceding lemmas to deal with the case
of a (normal) sequence $\vec{G}$ of extenders, hence producing the normalization $\vec{G}\cd\vec{F}$ of the pair $(\vec{G},\vec{F})$.

\begin{dfn}
 Let $\vec{P}=\left<P_\alpha\right>_{\alpha<\lambda}$ be a sequence of active
premice.
We say $\vec{P}$ and $\left<F^{P_\alpha}\right>_{\alpha<\lambda}$
 are \emph{normal} iff $\lgcd(P_\alpha)\leq\spc(F^{P_\beta})$
 and $(P_\alpha)^\passive\pins_{\card} P_\beta$
 for $\alpha<\beta<\lambda$.\footnote{Note that the requirement that $\lgcd(P_\alpha)\leq\spc(F^{P_\beta})$ matches
 the rules for determining tree order in $k$-maximal trees specified in \cite[***Definition 2.38]{kappa-plus_v3}.}
\end{dfn}

\begin{dfn}\label{dfn:G*F}
 Let $\left<P_\alpha\right>_{\alpha<\theta}$
 be a normal sequence of active premice.
  Let $F_\alpha=F^{P_\alpha}$,
 and $\vec{F}=\left<F_\alpha\right>_{\alpha<\theta}$.
  Let $\left<Q_\alpha,G_\alpha\right>_{\alpha<\lambda}$ and $\vec{G}$ be
likewise. If $G_\alpha$ is short, let $q_\alpha=0^-$,
and otherwise let $q_\alpha=0$.
 
 Let $\alpha<\lambda$. Let $\eta_\alpha$ be the largest $\eta\leq\theta$
 such that $\vec{F}\rest\eta$ is
 $(Q_\alpha,q_\alpha)$-pre-good.\footnote{Recall that this includes the demand that $\vec{F}\rest\eta$ is   ${<\lgcd(Q_\alpha)}$-bounded.}
 Suppose that $\vec{F}\rest\eta_\alpha$ is $(Q_\alpha,q_\alpha)$-good.
 Let $\xi_\alpha$ be the largest $\xi\leq\theta$ such that
 $\vec{F}\rest\xi$ is $(Q_\alpha,q_\alpha)$-pre-good
 and $\spc(F^{Q_\alpha})$-bounded.
 Note that $\xi_\alpha\leq\eta_\alpha$,
 so $\vec{F}\rest\xi_\alpha$ is also $(Q_\alpha,q_\alpha)$-good.
By normality, $\eta_\beta\leq\xi_\alpha$ for $\beta<\alpha$.

 Write $Q^\cd_\alpha=\Ult_0(Q_\alpha,\vec{F}\rest\eta_\alpha)$ and 
$G^\cd_\alpha=F^{Q^\cd_\alpha}$.
 Given $\beta<\theta$, say that $F_\beta$ is \emph{nested} (with respect to this $\cd$-product) iff 
$\xi_\alpha\leq\beta<\eta_\alpha$ for some $\alpha<\theta$;
 and \emph{unnested} otherwise.
  Then the $\cd$-product $\vec{G}\cd\vec{F}$ denotes the enumeration of
 \[ X=\{G^\cd_\alpha\}_{\alpha<\lambda}\cup\{F_\alpha\mid\alpha<\theta\text{ and
}F_\alpha\text{ is unnested}\} \]
 in order of increasing critical point.
 And $\vec{Q}\cd\vec{P}$ denotes the corresponding enumeration of
 \[ \{Q^{\cd *}_\alpha\}_{\alpha<\lambda}\cup\{P_\alpha\mid\alpha<\theta\text{ and 
}F_\alpha\text{ is unnested}\}.\]
 
 Also write
$Q_{\alpha\cd}=\Ult_0(Q_\alpha,\vec{F}\rest\xi_\alpha)$
 and $G_{\alpha\cd}=F^{Q_{\alpha\cd}}$.
\end{dfn}
\begin{lem}
 Adopt the hypotheses and notation of \ref{dfn:G*F}.  Let $m\in\om\cup\{0^-\}$. Let $M$ be an $m$-standard premouse.
 Suppose $\vec{G}$ is $(M,m)$-pre-good and
 $\vec{G}\cd\vec{F}$ is $(M,m)$-good.
 Let $U=\Ult_m(M,\vec{G})$.
Then:
 \begin{enumerate}
  \item\label{item:crits_disagree} If $E,F\in X$ then $\crit(E)\neq\crit(F)$, so 
the ordering of $\vec{G}\cd\vec{F}$ is well-defined.
  \item\label{item:seqs_are_normal} $\vec{G}\cd\vec{F}$ and $\vec{Q}\cd\vec{P}$ 
are normal sequences.
  \item\label{item:*_matches_comp} $\vec{G}\cd\vec{F}$ is equivalent to 
$\vec{G}\conc\vec{F}$ with respect to $(M,m)$; that is,
 $\vec{G}$ is $(M,m)$-good, $\vec{F}$ is $(U,m)$-good,
  \[ \Ult_m(M,\vec{G}\conc\vec{F})=\Ult_m(M,\vec{G}\cd\vec{F}) \]
  and the associated ultrapower maps \tu{(}and hence derived extenders\tu{)} 
agree.
 \end{enumerate}
\end{lem}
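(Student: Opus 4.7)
The plan is to prove parts \ref{item:crits_disagree}--\ref{item:*_matches_comp} simultaneously by induction on $\lh(\vec{G})=\lambda$, with Lemma \ref{lem:extender_comm} powering the successor step. The base case $\lambda=0$ is trivial: $\vec{G}\cd\vec{F}=\vec{F}$ and the only claim to check is $(U,m)$-goodness of $\vec{F}$ with $U=M$.

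For the successor step $\lambda+1$, abbreviate $\vec{G}'=\vec{G}\rest\lambda$, $G=G_\lambda$, $P=P_\lambda$, $q=q_\lambda$, $\xi=\xi_\lambda$, $\eta=\eta_\lambda$, and partition $\vec{F}=\vec{F}_0\conc\vec{F}_1\conc\vec{F}_2$ with $\vec{F}_0=\vec{F}\rest\xi$, $\vec{F}_1=\vec{F}\rest[\xi,\eta)$, $\vec{F}_2=\vec{F}\rest[\eta,\theta)$. By normality of $\vec{G}$ together with the monotonicity $\eta_\alpha\leq\xi_\lambda=\xi$ for $\alpha<\lambda$, every $F_\beta$ nested with respect to $\vec{G}'\cd\vec{F}$ lies in $\vec{F}_0$; the $F_\beta$ nested with respect to $\vec{G}\cd\vec{F}$ but not with respect to $\vec{G}'\cd\vec{F}_0$ are exactly those in $\vec{F}_1$; and every $F_\beta$ in $\vec{F}_2$ is unnested in both products. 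Applying the induction hypothesis to $(\vec{G}',\vec{F}_0)$ produces a common model $N=\Ult_m(M,\vec{G}'\cd\vec{F}_0)=\Ult_m(\Ult_m(M,\vec{G}'),\vec{F}_0)$ with commuting ultrapower maps; $N$ is $m$-standard by Lemma \ref{lem:ult_pres_standard}. I then invoke Lemma \ref{lem:extender_comm} with its $M$, $P$, $\vec{E}$, $\vec{F}$ replaced respectively by $N$, $Q_{\lambda\cd}=\Ult_0(P,\vec{F}_0)$ (whose active extender is $G_{\lambda\cd}$, the image of $G$), $\vec{F}_1$, $\vec{F}_2$. Its $\spc(G_{\lambda\cd})$-space-boundedness hypothesis on $\vec{F}_1$ is the defining maximality of $\xi$ given $\eta$; the ``$\delta_\cd<\spc(F_\alpha)$'' condition on $\vec{F}_2$ follows from the maximality of $\eta$ together with normality of $\vec{F}$. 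Composing the commuting diagrams supplied by the induction hypothesis and by Lemma \ref{lem:extender_comm} yields part \ref{item:*_matches_comp} for $(\vec{G},\vec{F})$, and in particular upgrades $(M,m)$-pre-goodness of $\vec{G}$ to $(M,m)$-goodness (since the target $U$ embeds into a wellfounded ultrapower) and gives $(U,m)$-goodness of $\vec{F}$.

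Parts \ref{item:crits_disagree} and \ref{item:seqs_are_normal} run on the same induction. For \ref{item:crits_disagree}, $\crit(G^\cd_\lambda)$ is the image of $\crit(G)$ under the $\vec{F}\rest\eta$-ultrapower; using normality of $\vec{G}$ and the agreement of the $\vec{F}\rest\eta$- with the $\vec{F}\rest\eta_\alpha$-ultrapowers on the relevant initial segments (via weak amenability and the ISC clause in $(M,m)$-pre-goodness), this is strictly larger than $\crit(G^\cd_\alpha)$ for $\alpha<\lambda$ and than $\crit(F_\beta)$ for any unnested $F_\beta$ enumerated before it. For \ref{item:seqs_are_normal}, the $\lgcd\leq\spc$ inequality and the $\pins_{\card}$ condition are checked case by case according to whether consecutive members of $\vec{G}\cd\vec{F}$ both come from the $G^\cd_\alpha$'s, both from unnested $F_\beta$'s, or one from each. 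At a limit $\lh(\vec{G})=\mu$, the induction hypothesis on each $\alpha<\mu$ passes through direct limits under the ultrapower maps, and $\vec{G}\cd\vec{F}$ is the natural union of the enumerations of $(\vec{G}\rest\alpha)\cd\vec{F}$.

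\textbf{Main obstacle.} The hardest part is the bookkeeping in the successor step: confirming that the ``enumerate by increasing critical point'' prescription of Definition \ref{dfn:G*F} produces the same interleaving that emerges from iterated applications of Lemma \ref{lem:extender_comm}, and correctly invoking the protomouse-avoiding form of that lemma (Case \ref{item:M_short_and_m=0,0-} in its proof) when $G_\lambda$ is long with $\crit(G_\lambda)$ coinciding with $\crit(F^R)$ for some active short model $R$ arising in the $\vec{G}'$-ultrapower sequence. One must also verify that the goodness hypotheses on shifted models hold at each stage, propagated via Lemma \ref{lem:ult_pres_standard} and the dropdown preservation of Lemma \ref{lem:ult_dropdown}.
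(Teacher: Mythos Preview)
Your overall strategy---induction on $\lh(\vec{G})$, with Lemma \ref{lem:extender_comm} powering the successor step---is exactly what the paper intends (it simply refers to the proof of \cite[Lemma 2.12]{fullnorm}, with Lemma \ref{lem:extender_comm} substituted for its short-extender predecessor). However, your instantiation of Lemma \ref{lem:extender_comm} is miswired. In that lemma, the sequence playing the role of $\vec{E}$ must be $\delta$-space-bounded (spaces \emph{below} $\spc(G)$, suitably shifted), while the sequence playing the role of its $\vec{F}$ must have spaces \emph{above} $\delta_\cd$. With your choice $P\to Q_{\lambda\cd}$ and $\vec{E}\to\vec{F}_1$, you would need $\vec{F}_1$ to be $\spc(G_{\lambda\cd})$-space-bounded; but the defining maximality of $\xi$ says precisely the opposite: $\vec{F}_0=\vec{F}\rest\xi$ is the longest $\spc(G_\lambda)$-space-bounded initial segment, so already $\spc(F_\xi)>\spc(G_{\lambda\cd})$. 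Likewise $\vec{F}_2$ cannot play the role of Lemma \ref{lem:extender_comm}'s $\vec{F}$, since by maximality of $\eta$ its first extender has space $\geq\lgcd(Q^\cd_\lambda)$, so $\vec{F}_2$ is not even $(Q_{\lambda\cd},q_\lambda)$-pre-good.

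A correct instantiation is $M\to M'=\Ult_m(M,\vec{G}')$, $P\to Q_\lambda$, $\vec{E}\to\vec{F}_0$, $\vec{F}\to\vec{F}_1$; or equivalently (having already absorbed $\vec{F}_0$ into $N$ via the inductive hypothesis) $M\to N$, $P\to Q_{\lambda\cd}$, $\vec{E}\to\emptyset$, $\vec{F}\to\vec{F}_1$. In either version $\vec{F}_2$ is not fed through Lemma \ref{lem:extender_comm} at all; it is simply appended on both sides afterward, which is legitimate because its spaces lie at or above $\lgcd(Q^\cd_\lambda)$ and hence above everything already in play. With this correction the rest of your outline (including the simultaneous verification of parts \ref{item:crits_disagree} and \ref{item:seqs_are_normal}, and the limit case) goes through and matches the paper's intended argument.
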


The proof is essentially
the same as that of \cite[***Lemma 2.12]{fullnorm},
but using  Lemma \ref{lem:extender_comm}
for the successor steps. We leave the verification to the reader.

\subsection{Minimal strategy condensation}\label{sec:min_strat_cond}

The remaining details of normalization are very close to those in \cite{fullnorm},
incorporating some simple modifications relating to the modified rules for determining tree order in $n$-maximal trees, and the adjustments to degrees, dealing with degree $0^-$ and $0$ appropriately, and replacing ``drops in model or degree'' with ``drops of any kind''. In \S\S\ref{sec:min_strat_cond}--\ref{sec:comparison_analysis} we enumerate the points where such modifications should be made.

We define the \emph{dropdown sequence} $\dds^{(\Tt,\theta)}$ for a putative tree $\Tt$ and $\theta\leq\lh(\Tt)$, much as in \cite[Definition 3.1***]{fullnorm}, but incorporating the degree $0^-$ for active short levels:

\begin{dfn}[Tree dropdown]\label{dfn:tree_dropdown}
Let $m\in\om\cup\{0^-\}$, let $M$ be $m$-standard,
and let $\Tt$ be a putative $m$-maximal tree on $M$.

Let $\beta<\lh(\Tt)$. Then
 $(P_\beta,d_\beta)$ denotes
\begin{enumerate}[label=--]
 \item $(\exit^\Tt_\beta,0^-)$, if $\beta+1<\lh(\Tt)$ and $E^\Tt_\beta$ is short,\footnote{In \cite{fullnorm}, instead of defining
a direct analogue of $P_\beta$, its ordinal height is defined and denoted $\lambda_\beta$.}
\item $(\exit^\Tt_\beta,0)$, if $\beta+1<\lh(\Tt)$ and $E^\Tt_\beta$ is long, and
 \item $(M^\Tt_\beta,\deg^\Tt_\beta)$,
 if $\beta+1=\lh(\Tt)$ and $M^\Tt_\beta$ well-defined.
\end{enumerate}
Let 
$\left<M_{\beta i},m_{\beta i}\right>_{i\leq k_\beta}$ be the 
\emph{reversed} extended dropdown of
\[ ((M^\Tt_\beta,\deg^\Tt_\beta),(P_\beta,d_\beta))\] (note 
this defines $k_\beta$).
Then  $k_\beta^{\Tt}\eqdef k_\beta$ and $M_{\beta i}^{\Tt}\eqdef M_{\beta i}$ and $m^\Tt_{\beta i}=m_{\beta i}$. Let $\theta\leq\lh(\Tt)$.
We define the \emph{dropdown domain} $\ddd^{(\Tt,\theta)}$ of $(\Tt,\theta)$ by
\[ \Delta=\ddd^{(\Tt,\theta)}\eqdef\{(\beta,i)\mid\beta<\theta\ \&\ i\leq k_\beta\}, \]
and define the \emph{dropdown 
sequence} $\dds^{(\Tt,\theta)}$ of $(\Tt,\theta)$ by
\[ \dds^{(\Tt,\theta)}\eqdef\left<(M_{\beta 
i},m_{\beta i})\right>_{(\beta,i)\in\Delta}. \]

The \emph{dropdown sequence} $\dds^\Tt$ of $\Tt$ is 
$\dds^{(\Tt,\lh(\Tt))}$, and 
the \emph{dropdown domain} $\ddd^\Tt$ of $\Tt$ is $\ddd^{(\Tt,\lh(\Tt))}$.

Given $\varsigma<\lgcd(\exit^\Tt_\alpha)$
for some $\alpha+1<\lh(\Tt)$,
$\alpha^\Tt_\varsigma$ denotes the least such $\alpha$,
and $n^\Tt_\varsigma$ denotes the largest $n\leq k^\Tt_\alpha$
such that $n=0$ or
$\rho_{m_{\alpha n}+1}(M_{\alpha i})\leq\varsigma$.
If instead $\lh(\Tt)=\alpha+1$
and $\varsigma\leq\OR(M^\Tt_\alpha)$
but $\lgcd(\exit^\Tt_\beta)\leq\varsigma$
for all $\beta+1<\lh(\Tt)$, then $\alpha^\Tt_\varsigma$ denotes $\alpha$
and $n^\Tt_\varsigma$ denotes $0$.
\end{dfn}

So if  $\beta+1<\lh(\Tt)$ and
 $\varsigma=\spc(E^\Tt_\beta)$ then
 $\pred^\Tt(\beta+1)=\alpha^\Tt_\varsigma$
 and 
$M^{*\Tt}_{\beta+1}=M^\Tt_{\alpha^\Tt_\varsigma n^\Tt_\varsigma}$.

\newcommand{\pre}{\mathrm{pre}}

\begin{dfn}[Tree pre-embedding]\label{dfn:tree_embedding}
(Cf.~\cite[Figure 1]{iter_for_stacks}.)
Let $M$ be an $m$-standard premouse, let $\Tt,\Xx$ be putative
$m$-maximal 
trees on $M$,
with $\Xx$ a true tree,
and $\theta\leq\lh(\Tt)$. A \emph{tree pre-embedding} from $(\Tt,\theta)$ 
to $\Xx$,
denoted
\[ \Pi:(\Tt,\theta)\hookrightarrow_{\pre}\Xx, \]
is a sequence $\Pi=\left<I_\alpha\right>_{\alpha<\theta}$
with properties as in \cite[Definition 3.3***]{fullnorm}, replacing ``$\deg$'' with ``${\D}$-$\deg$''
throughout (in its clauses 5 and 7). As there, we say that $\Pi$ has \emph{degree $m$}.
\end{dfn}

\begin{rem}\label{dfn:F-vec_tree_emb}\label{lem:intervals_I_cover_X-branches}
 \cite[Remark 3.4]{fullnorm} continues to hold
(after replacing ``$\deg$''
with ``$\D$-$\deg$'').
And  $\vec{E}^{\Xx}_{\alpha\beta}$
was defined in \cite[Definition 3.5]{fullnorm}.
Given $\Pi:(\Tt,\theta)\hookrightarrow_{\pre}\Xx$,
the \emph{inflationary extender sequences} $\vec{F}_\xi=\vec{F}_\xi^{\Pi}$ are defined just as in \cite[Definition 3.6]{fullnorm}.
\end{rem}

We next define \emph{minimal tree embedding}
just as in \cite{fullnorm}:
\begin{dfn}[Minimal tree embedding]\label{dfn:min_tree_emb}
Let $\Pi:(\Tt,\theta)\hookrightarrow_{\pre}\Xx$  be a tree pre-embedding.
We say $\Pi$ is 
\emph{minimal}, denoted 
\[ \Pi:(\Tt,\theta)\hookrightarrow_\minim\Xx,\]
provided the properties in \cite[***Definition 3.7]{fullnorm} hold,
after replacing ``drop in model or degree''
with ``drop in any way (that is, model, degree or Dodd-degree)''.
We define  \emph{bounding} and \emph{exactly bounding}  as in \cite[***3.7]{fullnorm}.
Also as in \cite{fullnorm}, 
we write $\Pi:\Tt\hookrightarrow_{\minim}\Xx$
iff $\Pi:(\Tt,\lh(\Tt))\tembto_{\minim}\Xx$.
\end{dfn}

Note that if $\Pi$ is a minimal tree pre-embedding,
then with $\alpha+1<\lh(\Tt)$
as in  \cite[***Definition 3.7, clause 1]{fullnorm}, for all $\beta+1\in I_\alpha\cut\{\gamma_\alpha\}$,
letting $\xi=\pred^\Xx(\beta+1)$,
we have $\spc(E^\Xx_\beta)<\lgcd(Q_{\alpha\xi})$,
because $\vec{F}_{\delta_\alpha}$
is $(\exit^\Tt_\alpha,0)$-good,
and hence $E^\Xx_\beta$ is $(Q_{\alpha\xi},0)$-good.

\begin{dfn}\label{dfn:min_dagger_tree_emb}\label{dfn:min_inf_ults_maps}
We define \emph{puta-minimal} tree pre-embedding
as in \cite[*** 3.8]{fullnorm}.

Let  $m\in\om\cup\{0^-\}$ and
$\Pi:(\Tt,\theta)\hookrightarrow_{\putamin}\Xx$
of degree $m$ on an $m$-standard $M$. Then we adopt the same notation introduced in \cite[***Definition 3.9]{fullnorm} dealing with dropdown lifts, and define \emph{pre-standard} as there.
(Note that although in \cite{fullnorm}, $m_{\beta k_\beta}=0$, here we have $m_{\beta k_\beta}\in \{0,0^-\}$, and $m_{\beta k_\beta}=0^-$ is possible.) \cite[***Remark 3.10]{fullnorm} continues to hold.
\end{dfn}

We next adapt the definition \emph{standard} from \cite[***3.11]{fullnorm}:

\begin{dfn}\label{dfn:min_inf_coherent}
Let $M,m,\Tt,\Xx,\Pi,\Delta$ be as in \cite[***3.9]{fullnorm}, as adpated in Definition  \ref{dfn:min_inf_ults_maps}.
We say $\Pi$ is \emph{standard} iff
\begin{enumerate}[label=T\arabic*.,ref=T\arabic*]
 \item\label{item:pre-standardness}  
$\Pi:(\Tt,\theta)\hookrightarrow_{\min}\Xx$ and $\Pi$ is pre-standard.  
\item\label{item:T_dropdowns_lift} (Dropdowns lift)
\cite[Definition 3.11(T2)]{fullnorm} holds,
after replacing $\mathscr{D}^\Xx_{\deg}$ with $\mathscr{D}^{\Xx}_{\D\text{-}\deg}$ in clauses (d) and (e),
and replacing clause (f) with the following:
\begin{enumerate}
\setcounter{enumii}{5}
\item\label{item:dropdown_correspondence} Suppose $\alpha+1<\lh(\Tt)$.
If $E^\Tt_\alpha$ is short, let $\ell=0^-$, and if long, let $\ell=0$.
Then:
\begin{enumerate}
 \item\label{item:dropdown_correspondence_gamma_alpha} $\left<(U_{\alpha j},m_{\alpha j})\right>_{j\leq k_\alpha}$ is
 the revex
$((M^\Xx_{\gamma_\alpha},\deg^\Xx_{\gamma_\alpha}),(Q_{\alpha\gamma_\alpha},
\ell))$-dd.
 \item If $\gamma_{\alpha i}<\xi\leq\delta_{\alpha i}$
 then $\left<(U_{\alpha j\xi},m_{\alpha j})\right>_{i\leq j\leq k_\alpha}$ is
 the revex
 $((M^\Xx_\xi,\deg^\Xx_\xi),(Q_{\alpha\xi},\ell))$-dd.
\end{enumerate}
\end{enumerate}

\item\label{item:T_emb_agmt} (Embedding agreement) Let $\alpha<\theta$ with $\alpha+1<\lh(\Tt)$
and  $\varsigma<\lgcd(\exit^\Tt_\alpha)$ be an $\exit^\Tt_\alpha$-cardinal
with $\alpha=\alpha^\Tt_\varsigma$ and
$i=n^\Tt_{\varsigma}$.
Let $\xi\in J_{\alpha i}$ be least such that,
letting $\mu=\pi_{\alpha i\xi}(\varsigma)$,
either $\xi=\delta_{\alpha i}$ or
$\mu<\spc(E^\Xx_\eta)$
where $\eta+1=\successor^\Xx(\xi,\delta_{\alpha i})$; note that
$\xi\geq\gamma_{\alpha i}$. Write $\xi_{\varsigma}=\xi$. 
Let $U=U_{\alpha i\xi}$ and $\pi=\pi_{\alpha i\xi}$.

Whenever $(\alpha',i',\xi')\geq(\alpha,i,\xi)$,
 $U'=U_{\alpha'i'\xi'}$ and $\pi'=\pi_{\alpha'i'\xi'}$, we have:
\begin{enumerate}[label=--]
\item $U||\mu^{+U}=
U'||\mu^{+U'}$,
\item either:
\begin{enumerate}[label=--]
 \item $\pi\rest\pow(\varsigma)\sub\pi'$, or
 \item  $\varsigma$ is inaccessible in  $M_{\alpha i}$ and if $(\alpha,i,\xi)<(\alpha',i',\xi')$
 then $\pi(\varsigma)<\pi'(\varsigma)$
 and $\pi(X)=\pi'(X)\cap\pi(\varsigma)$ for all $X\in\pow(\varsigma)\cap M_{\alpha i}$.
\end{enumerate}

\item if $\alpha<\alpha'$ and $(i,\xi)=(k^\Tt_\alpha,\delta_\alpha)$
then letting $\lambda=\lambda(E^\Tt_\alpha)$,\footnote{Recall here that if $E^\Tt_\alpha$
is long with $\nu(E^\Tt_\alpha)$ a limit,
then $\lambda^{+\exit^\Tt_\alpha}=\nu(E^\Tt_\alpha)=\lgcd(\exit^\Tt_\alpha)$,
which is the exchange ordinal associated to $E^\Tt_\alpha$.} we have:
\begin{enumerate}[label=--]
\item $\pi\rest\lambda\sub\pi'\rest\lambda$
and $\pi(\lambda)\leq\pi'(\lambda)$,
\item $\pi(X)=\pi'(X)\cap\pi(\lambda)$
for all $X\in\pow(\lambda)\cap\exit^\Tt_\alpha$,

\item 
$\lh(E^\Xx_{\delta_\alpha})\leq\pi'(\lh(E^\Tt_\alpha))$.
\end{enumerate}
\end{enumerate}

\item\label{item:T_commutativity} (Commutativity)
Condition \cite[Definition 3.11(T4)]{fullnorm} holds, after replacing $\mathscr{D}^{\Tt}_{\deg}$ and $\mathscr{D}^{\Xx}_{\deg}$ with $\mathscr{D}^{\Tt}_{\D\mathrm{-}\deg}$ and $\mathscr{D}^{\Xx}_{D\mathrm{-}\deg}$ throughout, and replacing clause (d) with:
\begin{enumerate}[label=\tu{(}\alph*\tu{)}]
\setcounter{enumii}{3}
\item\label{item:T_shift_lemma} (Shift Lemma)
Let $\varsigma=\spc(E^\Tt_\beta)$,
so $\varsigma<\lgcd(\exit^\Tt_\chi)$ and $i=n^\Tt_{\varsigma}$, so 
\ref{item:T_emb_agmt} applies. Then
\begin{enumerate}[label=(\roman*)]
\item $\xi$ (recall $\xi=\pred^\Xx(\gamma_{\beta+1})$)
is also as defined in \ref{item:T_emb_agmt} and
\[ (M^{*\Xx}_{\gamma_{\beta+1}},\deg^\Xx_{\gamma_{\beta+1}})
=(U_{\chi i\xi},m_{\chi i}).\]
So by  \ref{item:T_emb_agmt},
the Shift Lemma (see \cite[***2.46--2.49]{kappa-plus_v3}) applies to the embeddings
$\pi_{\chi i\xi}$
and
$\omega_{\beta\delta_\beta}:\exit^\Tt_\beta\to Q_{\beta\delta_\beta}$.\footnote{Note we might have $\omega_{\beta\delta_\beta}(\crit(E^\Tt_\beta))>\crit(Q_{\beta\delta_\beta})=\pi_{\chi i\xi}(\crit(E^\Tt_\beta))$,
in which case $\om_{\beta\delta_\beta}$ is $0$-deriving between active short premice,
and if $m_{\chi i}\in\{0,0^-\}$
then $\pi_{\chi i\xi}:M^\Tt_{\chi i}\to U_{\chi i\xi}$ can fail to be a $0$-embedding, in which case  is $0$-deriving between active short premice.}
\item 
$\pi_{\beta+1,0}$ is just the map given by the Shift Lemma
(this makes sense as $M^\Xx_{\gamma_{\beta+1}}=U_{\beta+1,0}$ by  
\ref{item:T_dropdowns_lift}).\qedhere
\end{enumerate}
\end{enumerate}
\end{enumerate}
\end{dfn}

The following lemma is proved by a straightforward
adaptation of \cite[***Lemma 3.12]{fullnorm}, which we leave to the reader:
\begin{lem}\label{lem:Pi_is_standard}
Let
$\Pi:(\Tt,\theta)\hookrightarrow_{\putamin}\Xx$ have degree $m$,
on an $m$-standard $M$.
Then $\Pi:(\Tt,\theta)\hookrightarrow_{\min}\Xx$, $\Pi$ is standard
and $\Tt\rest\theta$ has well-defined and wellfounded models.
\end{lem}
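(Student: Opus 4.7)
The plan is to prove the lemma by induction on $\theta\leq\lh(\Tt)$, simultaneously establishing that $\Tt\rest\theta$ has well-defined wellfounded models, that $\Pi\rest\theta$ is standard, and that $\Pi\rest\theta$ is minimal. The argument parallels the proof of \cite[***Lemma 3.12]{fullnorm}, but now incorporating the three kinds of modifications described in Definitions \ref{dfn:min_tree_emb} and \ref{dfn:min_inf_coherent}: replacing \emph{drops in model or degree} with \emph{drops in any way}, allowing degree $0^-$ at active short nodes, and handling the cases where component ultrapowers are formed so as to avoid a protomouse.

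The base case $\theta=0$ is vacuous. For the successor case $\theta=\beta+2$, I would first use the inductive hypothesis to form $M^\Tt_{\beta+1}$ via the correct version of $\Ult_{\deg^\Tt_{\beta+1}}$ (using $\Ult_0$ in the avoiding-protomouse sense, or the naive $\Ult_{m'}$ for $m'\geq 1$, or $\Ult_{0^-}$, according to whether $E^\Tt_\beta$ is long with critical point that of the active short extender of $M^{*\Tt}_{\beta+1}$, etc.). The essential verifications are then the dropdown-lifting condition \ref{item:T_dropdowns_lift} and the commutativity condition \ref{item:T_commutativity} for the new node. For \ref{item:T_dropdowns_lift}, I would apply Lemma \ref{lem:ult_dropdown} to the revex dropdown of $(M^{*\Tt}_{\beta+1},M^\Tt_{\beta+1})$ lifted by the inflationary sequence $\vec{F}_{\delta_\beta}^\Pi$, using the inductively established dropdown correspondence at the $\Xx$-side predecessor of $\gamma_{\beta+1}$ to identify the revex lifted dropdown with the sequence $\left<(U_{\beta+1,j},m_{\beta+1,j})\right>_{j\leq k_{\beta+1}}$ demanded by clause \ref{item:dropdown_correspondence}. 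For the Shift Lemma clause \ref{item:T_shift_lemma}, I would invoke Lemma \ref{lem:extender_comm} with $G=E^\Tt_\beta$ and $\vec{E}\conc\vec{F}$ taken to be the concatenation of the inflationary sequences relevant to the $\pred^\Tt(\beta+1)$-node and to $\beta$; the conclusion of that lemma supplies the commutativity at the active-extender level, identifies $\pi_{\beta+1,0}$ as the Shift-Lemma map, and specifies which variant of Shift Lemmas I--IV (\cite[***2.46--2.49]{kappa-plus_v3}) applies (including the $0$-deriving clauses when protomice are avoided). Wellfoundedness of $M^\Tt_{\beta+1}$ then follows because $\pi_{\beta+1,0}$ embeds it (as an initial segment, by the lift statement of Lemma \ref{lem:extender_comm}) into the wellfounded model $M^\Xx_{\gamma_{\beta+1}}$.

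At limit stages $\theta$, I would set up the direct-limit system along $[0,\theta)^\Tt$ (above the last drop, or the whole branch if there is none). The embedding-agreement clause \ref{item:T_emb_agmt} together with the inductively known commutativity shows that the tree-embedding data $\Pi\rest\theta$ restricts coherently to this system and induces an embedding of $M^\Tt_\theta$ into the corresponding direct limit of the $\Xx$-side models along the branch picked out by the intervals $I_\alpha$; wellfoundedness of $M^\Tt_\theta$ follows from that of its image. Standardness and minimality pass through direct limits in the standard fashion, since both conditions are local or preserved by composition with the direct-limit maps.

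The main obstacle is the Shift Lemma step in the cases where either $E^\Tt_\beta$ or some $E^\Xx_\gamma$ used along $J_{\pred^\Tt(\beta+1),i}$ is long with critical point equal to that of an active short extender in play (so that some component ultrapower is formed avoiding a protomouse). In these cases the factor maps $\pi_{\chi i\xi}$ can fail to be $0$-embeddings, as noted in the remark after Lemma \ref{lem:ult_pres_standard}, and $\omega_{\beta\delta_\beta}$ can be $0$-deriving rather than fully $0$-elementary; one must therefore invoke the $0$-deriving (clause (b)) variants of Shift Lemmas I and II from \cite[***2.46, 2.47]{kappa-plus_v3} instead of the naive versions. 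The case distinctions needed here are precisely those already carried out in the proof of Lemma \ref{lem:extender_comm}, so once the Shift Lemma step of \cite[***3.12]{fullnorm} is recognized as an instance of Lemma \ref{lem:extender_comm} applied to the appropriate component, the remaining bookkeeping (degree $0^-$ conventions, replacing $\mathscr{D}^\cdot_{\deg}$ by $\mathscr{D}^\cdot_{\D\text{-}\deg}$ throughout) is routine.
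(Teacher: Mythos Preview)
Your proposal is correct and follows exactly the approach the paper itself indicates: the paper's own proof is simply the sentence ``a straightforward adaptation of \cite[***Lemma 3.12]{fullnorm}, which we leave to the reader,'' and your outline is precisely such an adaptation, correctly identifying Lemmas \ref{lem:ult_dropdown} and \ref{lem:extender_comm} as the key ingredients for the dropdown-lifting and Shift Lemma steps, and correctly flagging the protomouse-avoidance cases as the only place where genuinely new care is needed. One very small slip: in the successor step you say $\pi_{\beta+1,0}$ embeds $M^\Tt_{\beta+1}$ ``as an initial segment'' into $M^\Xx_{\gamma_{\beta+1}}$; in fact by clause \ref{item:T_dropdowns_lift} one has $M^\Xx_{\gamma_{\beta+1}}=U_{\beta+1,0}$ and $\pi_{\beta+1,0}$ is the Shift Lemma map into it, so wellfoundedness follows directly from the embedding, not from an initial-segment relation.
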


\begin{dfn}\label{dfn:Pi_subscript_notation}\label{dfn:pi_kappa}\label{dfn:trivial_tree_embedding}
For minimal tree embeddings $\Pi$, we use
notation analogous to that of \cite{iter_for_stacks} and \cite[***3.14]{fullnorm}; the subscript ``$\Pi$''
indicates objects associated to $\Pi$.

 Let
$\Pi:(\Tt,\theta)\tembto_{\minim}\Xx$
and $\gamma_\beta=\gamma_{\Pi\beta}$, 
etc. 
Let $\beta<\theta$
and $\varsigma\leq\OR(M^\Tt_\beta)$ with
$\beta=\alpha^\Tt_\varsigma$,
and let $n=n^\Tt_\varsigma$ (Definition \ref{dfn:tree_dropdown}).
Let  $N^\Tt_{\varsigma}=M^\Tt_{\beta n}$ and
$\xi=\xi_{\Pi\varsigma}\in I_{\beta n}$ be defined
as $\xi_\varsigma$ in
\ref{dfn:min_inf_coherent}(\ref{item:T_emb_agmt}),
or if $\lh(\Tt)=\beta+1$ and $\varsigma=\OR(M^\Tt_\beta)$,
then $\xi=\xi_{\Pi\varsigma}=\delta_\beta$.
Also let $U_{\Pi\varsigma}=U_{\beta n\xi}$ and
$\pi_{\Pi\varsigma}:N^\Tt_{\varsigma}\to U_{\Pi\varsigma}$
the corresponding ultrapower map.

Given $\gamma\in\theta\cap\lh(\Tt)^-$
and $\xi\in I_{\Pi\gamma}$,  $E_{\Pi\xi}$ denotes
$F^{Q_{\gamma\xi}}$, as in \cite[***3.16]{fullnorm}.

We define \emph{trivial}  and
 \emph{identity} tree embeddings as in \cite[***3.17]{fullnorm}.
\end{dfn}

\begin{rem}\label{lem:Pi_min_copy_normal}\label{dfn:one-step}\label{lem:one-step_copy_extension_is_standard}As in \cite{iter_for_stacks} and \cite{fullnorm},
we
can propagate minimal tree embeddings. Firstly, \cite[***Lemma 3.18]{fullnorm} continues to hold, via a very similar proof. Thus, given
$\Pi:(\Tt,\alpha+1)\hookrightarrow_{\min}\Xx$ of degree $m$ on an $m$-standard $M$,
where $\alpha+1<\lh(\Tt)$,
we can define the
\emph{one-step copy extension} $\Pi'$ of $\Pi$ just as in \cite[***3.19]{fullnorm}. \cite[***Lemma 3.20]{fullnorm} holds here, so $\Pi':(\Tt,\alpha+2)\hookrightarrow_{\min}\Xx$ and $\Pi'$ is standard.\end{rem}

\begin{rem}\label{dfn:inflationary_extender}\label{lem:E-inflation_is_minimal}
The $\Tt$-inflationary case
is likewise a direct generalization: We define \emph{minimal $E$-inflation}
as in  \cite[***3.21]{fullnorm},
under circumstances analogous to there,
though in the 3rd enumerated point of \cite[***3.21]{fullnorm},
we replace ``$\crit(E)<\widetilde{\nu}(Q_{\Pi\beta\xi})$''
with ``$\spc(E)<\lgcd(Q_{\Pi\beta\xi})$'',
and in the 4th point,
replace
$\mathscr{D}^{\Xx'}_{\deg}$ with  $\mathscr{D}^{\Xx'}_{\D\mathrm{-}\deg}$.
\cite[Lemma 3.22]{fullnorm} continues to hold here, via a similar proof.
\end{rem}

We now proceed to the definition of a \emph{minimal inflation} of a normal 
iteration tree $\Tt$, following \cite{iter_for_stacks} and \cite{fullnorm}.

\begin{dfn}\label{dfn:min_inflation}
For $m\in\om\cup\{0^-\}$,
 $M$ an $m$-standard premouse
 and $\Tt,\Xx$ putative $m$-maximal on $M$, with $\Xx$ a true tree, we say that $\Xx$ is a \emph{minimal inflation} of $\Tt$ just in case there is $(t,C,C^-,f,\left<\Pi_\alpha\right>_{\alpha\in C})$ with properties as in \cite[Definition 3.23]{fullnorm}, though with $\mathscr{D}^{\Xx}_{\deg}$ replaced by $\mathscr{D}^{\Xx}_{\D\mathrm{-}\deg}$ in its clause 8.
 \end{dfn}
 \begin{rem}
 \cite[Remark 3.24]{fullnorm} and \cite[Lemmas 3.25, 3.27]{fullnorm} continue to hold,
 and we adopt the notation in \cite[Definition 3.26]{fullnorm}.
\end{rem}

The strategy condensation notions are the direct generalizations of their counterparts in \cite{fullnorm}.

\begin{dfn}\label{dfn:minimal_inflation_condensation}\label{dfn:mhc}
For $\Omega>\om$  regular, $m\in\omega\cup\{0^-\}$,
and $\Sigma$ an $(m,\Omega+1)$-strategy for an $m$-standard pm $M$,
we define \emph{minimal inflation condensation \tu{(}mic\tu{)}} just as in \cite[Definition 3.28]{fullnorm},
and \emph{minimal hull condensation} as in \cite[Definition 3.31]{fullnorm}.\end{dfn}

\begin{rem}\label{lem:inflationary_limit_continuation}\label{lem:unique_strat_has_inf_cond}
The lemmas
\cite[3.29, 3.30, 3.32***]{fullnorm}  hold in the present context,
via the same proofs.
\cite[Theorem 3.34]{fullnorm} also holds, with a slight tweak:
\end{rem}

\begin{tm}\label{tm:wDJ_implies_cond}
Let $\Omega>\om$ be regular and $m\in\omega\cup\{0^-\}$.
Let $M$ be an $m$-standard pure $L[\es]$-premouse
with $\card(M)<\Omega$.
Suppose that if $M$ is active short and $m=0$
then $\rho_{\D}^M=0$.\footnote{We make this assumption because this is the context in which we consider weak Dodd-Jensen when $M$ is active short and $m=0$. If $M$ is active short and $0$-standard but $\rho_{\D}^M>0$,
then we can have iteration maps $i:M\to N$ which arise from $0$-maximal trees which do not drop in any way along $b^\Tt$, but $i^\Tt$ fails to be a $0$-embedding (although it is $0$-deriving).} Let $\Sigma$ be an $(m,\Omega+1)$-strategy for $M$ such that either
$\Sigma$ has the Dodd-Jensen property, or $M$ is countable and $\Sigma$ has the weak Dodd-Jensen property \tu{(}with respect to some enumeration of $M$\tu{)}.
Then $\Sigma$ has minimal hull condensation.
\end{tm}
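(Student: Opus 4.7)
The plan is to transcribe the proof of \cite[Theorem 3.34]{fullnorm} to the present context, with minor tweaks to accommodate long extenders, the degree $0^-$, and the possible failure of $0$-embedding-ness of iteration maps in the active short case. As in \cite{fullnorm}, the core of the argument is a standard application of (weak) Dodd-Jensen: assume for contradiction that $\Sigma$ fails minimal hull condensation, and extract from the failure two competing embeddings into a common iterate, one of which is pointwise strictly below an iteration map on the weak Dodd-Jensen enumeration.

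First I would suppose for contradiction that some minimal tree embedding $\Pi:\Uu\tembto_{\minim}\Xx$ of degree $m$ witnesses failure: $\Xx$ is via $\Sigma$ but $\Uu$ is not. By minimizing $\lh(\Uu)$ and using Lemma \ref{lem:Pi_is_standard} together with the one-step copy extension (Remark \ref{lem:one-step_copy_extension_is_standard}) and the inflationary extension (Remark \ref{lem:E-inflation_is_minimal}), I may assume the offense occurs at a limit $\lambda<\lh(\Uu)$: the branch $b=[0,\lambda)_\Uu$ differs from $c=\Sigma(\Uu\rest\lambda)$, while $\Uu\rest\lambda$ is via $\Sigma$. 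The tree embedding $\Pi$ provides a realization of $\Uu\rest(\lambda+1)$ into $\Xx$, with associated maps $\pi_{\Pi\varsigma}$ from nodes of $\Uu$ into nodes of $\Xx$, and in particular a commutative diagram linking $i^\Uu_{0\lambda}$ (along $b$) with an iteration map along a branch of $\Xx$.

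Next I would extend $\Uu\rest(\lambda+1)^\frown c$ by $\Sigma$, and simultaneously continue $\Xx$ by $\Sigma$, carrying out a coiteration of the two resulting last models down to a common iterate $P$. Along one side we obtain, by composition through $\Pi$, an embedding $\sigma:M\to P$ whose values on the weak Dodd-Jensen enumeration are dominated by the values of the genuine $\Sigma$-iteration map $\tau:M\to P$ coming from the coiterated side that goes through the $\Sigma$-chosen branch $c$. On the other side, the actual branch $b$ of $\Uu$ produces, again via $\Pi$ and the coiteration, a second embedding $\sigma':M\to P$. Weak Dodd-Jensen applied to both compositions forces $b=c$, a contradiction. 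This is exactly the pattern in \cite[Theorem 3.34]{fullnorm}; the only new ingredient is that at successor steps in the coiteration we must invoke the versions of the Shift Lemma from \cite[\S2.7]{kappa-plus_v3} allowed by clause \ref{item:T_shift_lemma} of Definition \ref{dfn:min_inf_coherent}, and possibly replace $\Ult_0$ by its protomouse-avoiding variant from \cite{kappa-plus_v3} when long extenders are encountered.

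The main obstacle, and the reason for the extra hypothesis in the statement, is that when $M$ is active short and $m\in\{0,0^-\}$, an iteration map along a non-dropping branch can fail to be a $0$-embedding: it is only $0$-deriving, as recorded in the remark following Lemma \ref{lem:ult_pres_standard}. The weak Dodd-Jensen comparison requires enough elementarity of $\sigma,\sigma',\tau$ to compare their images on the enumeration, which in the usual short-extender setup is guaranteed by $0$-embedding-ness. The hypothesis $\rho_\D^M=0$ (in the active short, $m=0$ case) is exactly what rules out the bad configuration: the protomouse-avoiding $\Ult_0$ arises only when some $E^\Tt_\alpha$ is long with $\spc(E^\Tt_\alpha)<\rho_\D^{M^\Tt_{\pred^\Tt(\alpha+1)}}$, which becomes vacuous once $\rho_\D^M=0$ is propagated along the non-dropping part of the branch. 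For $m=0^-$ one works at the Dodd-level directly, where the weak Dodd-Jensen enumeration and the corresponding comparison argument go through without change. Given this, the remaining details are a routine rewrite of \cite[Theorem 3.34]{fullnorm}.
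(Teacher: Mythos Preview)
Your proposal is correct and takes essentially the same route as the paper: both point to a routine adaptation of the proof of \cite[Theorem 4.47]{iter_for_stacks} (via \cite[Theorem 3.34]{fullnorm}), with the comparison carried out using the $\kappa^+$-supercompact-level techniques of \cite{kappa-plus_v3}. The paper's own proof is a one-line reference, so your more detailed sketch of the weak Dodd-Jensen comparison argument and of the role of the hypothesis $\rho_\D^M=0$ simply unpacks what the paper leaves implicit.
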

\begin{proof}
Like for \cite[Theorem 3.34]{fullnorm}, the proof is a routine adaptation of the proof of \cite[Theorem 4.47]{iter_for_stacks}, incorporating the techniques for analysing comparison at the level of $\kappa^+$-supercompactness, such as in \cite{nsp1} and \cite{nsp1fs},
or alternatively in \cite{kappa-plus_v3}.
\end{proof}

\begin{dfn}\label{dfn:inflation_notation}
We also adopt the terminology \emph{$(\Tt)$-pending}, \emph{non-$(\Tt)$-pending},
\emph{$(\Tt)$-terminal},
\emph{$(\Tt)$-terminally-non-dropping}, \emph{$(\Tt)$-terminally-non-dropping}, and define $\pi_\infty^{\Tt\mininflatearrow\Xx}$ just like in \cite[Definition 3.35]{fullnorm}.
\end{dfn}

\begin{rem}\label{rem:non-drop_inf_comm}\cite[Remark 3.36]{fullnorm} continues to hold, though recall that we can have $m=0^-$ or $\deg^\Xx_\infty=0^-$,
and a $0^-$-maximal embedding
between active short premice need not preserve critical points.\end{rem}

\subsection{The factor tree $\Xx/\Tt$}

\begin{rem}\label{lem:<_Xx/Tt}\label{lem:M^Xx^alpha_infty_ult_rep}\label{lem:flattening_properties}
We adopt the notation and terminology introduced in the first paragraph of \cite[\S4.1]{fullnorm}, immediately prior to \cite[Lemma 4.1]{fullnorm}.
(But of course, minimal inflations in the sense of this paper replace
the inflations of \cite{iter_for_stacks}.)
Then \cite[Lemmas 8.4, 8.6]{iter_for_stacks} hold,
where in \cite[Lemma 8.4(iii)]{iter_for_stacks}, we replace
``$\crit(E^{\Xx}_{\zeta^\gamma})<\iota(\exit^{\Xx}_{\zeta^\eta})$''
with ``$\spc(E^{\Xx}_{\zeta^\gamma})<\lgcd(\exit^{\Xx}_{\zeta^\eta})$''.
 \cite[Lemma 4.1]{fullnorm} was a reformulation of
 \cite[8.7]{iter_for_stacks},
using the notation $\xi^\alpha_\kappa$,
 as opposed to the
 notation $\gamma^\alpha_{\theta\kappa}$ of \cite{iter_for_stacks}.
This lemma continues to hold,
except that its clause 4(d) becomes the following,
using the notation $\xi^\alpha_\varsigma=\xi_{\Pi_\alpha\varsigma}$
and $\pi^\alpha_\varsigma=\pi_{\Pi_\alpha\varsigma}$
from Definition \ref{dfn:pi_kappa}:
\begin{enumerate}[label=4(d)]
\item If $\theta+1<\lh(\Tt)$ then for $\varsigma<\lgcd(\exit^\Tt_\theta)$ with $\theta=\alpha^\Tt_\varsigma$,
if $\pi^\alpha_\varsigma(\varsigma)<\spc(E^\Xx_{\zeta^\xi})$ then $\xi^\alpha_\varsigma=\xi^\beta_\varsigma$, and if $\pi^\alpha_\varsigma(\varsigma)\geq\spc(E^\Xx_{\zeta^\xi})$ then $\xi^\alpha_\varsigma=\chi<^\Xx\xi^\beta_\varsigma$.
\end{enumerate}
(Note here that  in \cite{fullnorm}, in the case of interest, $\varsigma$
was is potential critical point $\kappa$ of an extender, whereas
here, $\varsigma$ is a potential space of an extender.)

\cite[Lemma 4.2]{fullnorm} continues to hold, by the same proof.
We define $\pi^{\alpha\beta}$
and $\pi^{*\alpha\beta}$
as in \cite[Definition 4.3]{fullnorm},
replacing $\mathscr{D}^{\Xx}_{\deg}$ with $\mathscr{D}^{\Xx}_{\D\mathrm{-}\deg}$ in its second paragraph. We define the factor tree $\Xx/\Tt$ as in \cite[Definition 4.4]{fullnorm}.

\cite[Lemma 4.5]{fullnorm} carries over, except that
all references to dropping ``in model or degree'' become dropping ``in any way''
(in particular, all instances of ``$\mathscr{D}_{\deg}$''
become ``$\mathscr{D}_{\D\mathrm{-}\deg}$'').
Its proof is the same, except that in the inductive proof of parts 2--5, in the successor step (where $\eta=\alpha+1$), letting $\beta=\pred^\Uu(\alpha+1)$,
$\beta$ is now least such that $\spc(E)<\lgcd(\exit^\Uu_\beta)$, and so the fact that
$\beta=\pred^{\Xx/\Tt}(\alpha+1)$ uses
the modification of \cite[Lemma 8.4(iii)]{iter_for_stacks}  mentioned above.
\end{rem}

\subsection{Minimal comparison}
\begin{rem}\label{dfn:min_comp}
We define minimal comparison just as in
\cite[Definition 5.1]{fullnorm},
and \cite[Lemma 5.2]{fullnorm} holds here,
by the same proof.
\end{rem}

\subsection{Minimal inflation stacks}
\begin{rem}\label{lem:inflation_commutativity}\label{cor:terminal_dropping_equiv}
The commutativity of minimal inflation (\cite[Lemma 6.1]{fullnorm}) goes through,
replacing $\mathscr{D}^{\Xx_2}_{\deg}$ with $\mathscr{D}^{\Xx_2}_{\D\mathrm{-}\deg}$ in clause C5(d),(e). The proof is like there, except that in the proof of part C5(f) in Case 3, the inequality ``$\crit(E)<\crit(E^{\Xx_2}_{\alpha_2})<\crit(F)$''
should be replaced with ``$\spc(E)<\spc(E^{\Xx_2}_{\alpha_2})<\spc(F)$''.
(In fact $\spc(E)<\lambda(E)\leq\crit(E^{\Xx_2}_{\alpha_2})\leq\spc(E^{\Xx_2}_{\alpha_2})<\spc(F)$,
but it is possible that $\spc(E^{\Xx_2}_{\alpha_2})=\crit(E^{\Xx_2}_{\alpha_2})=\crit(F)<\spc(F)$.) And \cite[Corollary 6.2]{fullnorm} holds.

\cite[\S6.2]{fullnorm} adapts directly: we define \emph{\tu{(}continuous\tu{)} terminal minimal inflation stack} as in \cite[Definition 6.3]{fullnorm} and \emph{goodness}
as in \cite[Definition 6.5]{fullnorm}. \cite[Lemmas 6.4, 6.6]{fullnorm} go through with identical proofs, just replacing references to dropping in model or degree to dropping in any way.
\end{rem}

\subsection{Normalization of transfinite stacks}\

Theorems \ref{tm:full_norm_stacks_strategy} and
\ref{tm:stacks_iterability_2}
are now proved just like \cite[***Theorems 1.1, 7.2]{fullnorm} (in \cite[***\S7]{fullnorm}),
replacing drops ``in model or degree'' with drops ``of any kind''. Likewise for the ``minimal'' adaptation of \cite[Theorem 9.6]{iter_for_stacks} to our context.

\subsection{Anlaysis of comparison}\label{sec:comparison_analysis}

The analaysis of comparison in
\cite[\S8]{fullnorm} goes through in the same fashion.

\bibliographystyle{plain}
\bibliography{../bibliography/bibliography}

\begin{thebibliography}{10}

\bibitem{jensen_book}
Ronald Jensen.
\newblock Manuscript on fine structure, inner model theory, and the core model
  below one {W}oodin cardinal.
\newblock Forthcoming book, draft available at
  \url{https://www.math.uni-bonn.de/~raesch/jensen/}.

\bibitem{nsp1fs}
Itay Neeman and John Steel.
\newblock Fine structure for plus-one premice.
\newblock Handwritten notes, in parts I and II, available at
  \url{https://math.berkeley.edu/~steel/}, 2014.

\bibitem{nsp1}
Itay Neeman and John Steel.
\newblock Plus-one premice.
\newblock Handwritten notes, available at
  \url{https://math.berkeley.edu/~steel/}, 2014.

\bibitem{fsfni_v5}
Farmer Schlutzenberg.
\newblock Fine structure from normal iterability.
\newblock To appear in Journal of Mathematical Logic, DOI \fsfniurl. Preprint
  arXiv:2011.10037v5.

\bibitem{fullnorm}
Farmer Schlutzenberg.
\newblock Full normalization for transfinite stacks.
\newblock arXiv:2102.03359v3.

\bibitem{kappa-plus_v3}
Farmer Schlutzenberg.
\newblock The initial segment condition for $\kappa^+$-supercompactness.
\newblock arXiv:2306.13827v3.

\bibitem{premouse_inheriting}
Farmer Schlutzenberg.
\newblock A premouse inheriting strong cardinals from {$V$}.
\newblock {\em Annals of Pure and Applied Logic}, 171(9), 2020.

\bibitem{iter_for_stacks}
Farmer Schlutzenberg.
\newblock Iterability for (transfinite) stacks.
\newblock {\em Journal of Mathematical Logic}, 21(2), 2021.

\bibitem{siskind_steel}
Benjamin Siskind and John Steel.
\newblock Full normalization for mouse pairs.
\newblock arXiv:2207.11065.

\bibitem{steel_deiser_notes}
John~R. Steel.
\newblock Iterations with long extenders.
\newblock Available at \url{\steelurl}, year={2002}.

\bibitem{steel_local_HOD_comp}
John~R. Steel.
\newblock Local {HOD} computation.
\newblock 2016.
\newblock Handwritten notes, available at \url{\steelurl}.

\bibitem{ACPFMP}
John~R. Steel.
\newblock {\em A {C}omparison {P}rocess for {M}ouse {P}airs}.
\newblock Cambridge University Press, 2022.

\bibitem{voellmer}
Andreas Voellmer.
\newblock {\em A Partial Characterization of $\square_\kappa$ for Plus-One
  Premice}.
\newblock PhD thesis, 2017.

\end{thebibliography}

\end{document}